\numberwithin{equation}{section}
\newtheorem{theorem}{Theorem}[section]
\newtheorem{corollary}[theorem]{Corollary}
\newtheorem{lemma}[theorem]{Lemma}
\newtheorem{proposition}[theorem]{Proposition}
\theoremstyle{definition}
\newtheorem{definition}[theorem]{Definition}
\theoremstyle{remark}
\newtheorem{remark}[theorem]{Remark}
\newtheorem*{remark*}{Remark}
\newcommand{\Z}{{\mathbb Z}}
\newcommand{\R}{{\mathbb R}}
\newcommand{\N}{{\mathbb N}}
\DeclareMathOperator{\Inn}{Inn}
\DeclareMathOperator{\Aut}{Aut}
\DeclareMathOperator{\Out}{Out}
\DeclareMathOperator{\End}{End}
\DeclareMathOperator{\GL}{GL}
\DeclareMathOperator{\GO}{O}
\DeclareMathOperator{\SO}{SO}
\DeclareMathOperator{\FR}{FR}
\DeclareMathOperator{\diag}{diag}
\title{Automorphism groups of combinatorial Hantzsche-Wendt groups}
\date{}
\author{Rafał Lutowski$^{*,}$}
\author{Andrzej Szczepański}
\affil{Institute of Mathematics, Faculty of Mathematics, Physics and Informatics, University of Gda\'nsk\par Wita Stwosza 57, 80-308 Gda\'nsk, Poland}
\author{Richard Weidmann}
\affil{Mathematisches Seminar, Christian-Albrechts-Universit\"at zu Kiel\par Heinrich-Hecht-Platz 6, 24118 Kiel, Germany}
\begin{document}
\maketitle

\renewcommand{\thefootnote}{\fnsymbol{footnote}}
\begin{NoHyper}
\footnotetext{$^*$Corresponding author: \url{rafal.lutowski@ug.edu.pl}}
\footnotetext{2020 \emph{Mathematics Subject Classification.} Primary: 20F28 Secondary: 20F05, 20E36, 20E06}
\footnotetext{\emph{Keywords and phrases:} combinatorial Hantzsche-Wendt, automorphism, outer automorphism, presentation}
\end{NoHyper}
\renewcommand{\thefootnote}{\arabic{footnote}}

\begin{abstract}
\noindent
Combinatorial Hantzsche-Wendt groups were introduced by W. Craig and P.A. Linnell. Every such a group $G_n$, where $n$ is a natural number, encodes the holonomy action of any $n+1$-dimensional Hantzsche-Wendt manifold. $G_2$ is the fundamental group of the classical Hantzsche-Wendt manifold -- the only one $3$-dimensional oriented flat manifold with non-cyclic holonomy group. In this article, we describe the structure of the automorphism and of the outer automorphism groups of combinatorial Hantzsche-Wendt groups.
\end{abstract}

\section{Introduction}

A \emph{crystallographic group} of dimension $n$ is a discrete and cocompact subgroup of the group $E(n) = \GO(n)\ltimes \R^n$ of isometries of the Euclidean space $\R^n.$ By Bieberbach theorems (see \cite{B11,B12}, \cite{Ch86}, \cite{Sz12}), any crystallographic group $\Gamma$ of dimension $n$ defines a short exact sequence
\[
0 \longrightarrow \Z^n \longrightarrow \Gamma \longrightarrow H \longrightarrow 1,
\] 
where $\Z^n$ is isomorphic to the subgroup of all translations of $\Gamma$ and $H$ is a finite group.

By definition, $\Gamma$ acts on the Euclidean space $\R^n.$ When this action is free or -- equivalently -- when $\Gamma$ is torsion-free, the orbit space $M = \R^n/\Gamma$ is a connected closed Riemannian manifold with vanishing sectional curvature -- a \emph{flat manifold}. In this case, $\Gamma$ is called a \emph{Bieberbach group}  and $\Gamma$ is the fundamental group of $M$. $M$ is orientable if and only if $\Gamma \subset \SO(n) \ltimes \R^n$.

Assume that $M = \R^n/\Gamma$ is an orientable flat manifold, where $H = C_2^{n-1}$ is an elementary abelian group of rank $n-1$. Then $\Gamma$ and $M$ are called \emph{Hantzsche-Wendt} group and manifold, respectively. They exist only in odd dimensions (see \cite{MR99}). For the specific case $n = 3$, up to isomorphism, there exists only one Hantzsche-Wendt group -- $\Gamma_3$. It was defined for the first time by W. Hantzsche, H. Wendt and independently by W. Nowacki in 1934 (see \cite{HW31}, \cite{NW}). The presentation of $\Gamma_3$ follows from
\cite[Lemma 13.3.1, pp. 606-607]{Pass77}:
\[
\Gamma_3 = \langle x,y | x^{-1}y^2xy^2, y^{-1}x^2yx^2\rangle.
\]
The group $\Gamma_3$ has many interesting properties -- it is a non-unique product group (see \cite{Prom33}) and a counterexample to the Kaplansky unit conjecture (see \cite{gard21}, \cite{Gard23}). For more properties of Hantzsche-Wendt groups, we refer the reader to \cite[Chapter 9]{Sz12}.

In \cite{CL22}, the following generalization of $\Gamma_3$ is proposed:
\begin{definition}
\label{maindef}
Let $n \in \N$. The \emph{combinatorial Hantzsche-Wendt group} $G_n$ is given by the following presentation:
\[
G_n := \langle x_1, \ldots, x_n \mid \forall_{i \neq j} \; x_i^{-1} x_j^2 x_i = x_j^{-2}  \rangle.
\]
\end{definition}
By \cite[Lemma 3.1]{CL22}, $A_n := \langle x_1^2, \ldots, x_n^2 \rangle$ is a normal free abelian subgroup of rank $n$ and we have the following short exact sequence
\begin{equation}
\label{eq:extension}
1 \longrightarrow A_n \longrightarrow G_n \longrightarrow W_n=G_n/A_n \longrightarrow 1.
\end{equation}
If \ $\bar{\ } \colon G_n \to W_n$, $g\mapsto \bar g=gA_n$ denotes the canonical epimorphism, then
\[
W_n=\langle \bar x_1,\ldots ,\bar x_n\mid \bar x_1^2,\ldots ,\bar x_n^2\rangle\cong \underbrace{C_2* \ldots * C_2}_n
\]
is the free product of $n$ copies of the group of order $2$.

The goal of the article is a description of the automorphism and outer automorphism groups of combinatorial Hantzsche-Wendt groups, by revealing their structure in Theorems \ref{theorem:aut-out-1} and \ref{theorem:aut-out-2} as well as their presentations in Theorems \ref{theorem:presentation-of-autg} and \ref{theorem:presentation-of-outg}.

To simplify the notation, we fix the positive integer $n$ throughout the article, and we assume the following notation:
\[
A := A_n, G := G_n, W := W_n.
\]

Let us present the structure of our article. Section 2 is devoted to a presentation of the group $\Aut(W)$. Section 3 deals with specific endomorphisms of the group $G$. Sections 4 and 5 give us the first view of the structure of the automorphism group of $G$. In particular, we use a Lemma \ref{lemma:characteristic} to prove that $A$ is a characteristic subgroup of $G.$ To take advantage of Charlap's approach in the determining of $\Out(G)$ from \cite{Ch86}, presented in Section 7, we first deal with semi-linear automorphisms of $A$ in Section 6.

\section{Presentation of \texorpdfstring{$\Aut(W)$}{Aut(W)}}
\label{section:presentation-of-autw}

The purpose of this section is to give a presentation of the group $\Aut(W)$. A presentation of the automorphism group of an arbitrary free product in terms of the automorphism groups of the free factors was given by Fouxe-Rabinovitch \cite{Foux40,Foux41}, see also Gilbert \cite{Gi87}. 

In the current setting, the presentation will be much simpler than in the general case as $W$ has no cyclic free factors and as all free factors in the Grushko decomposition are of order $2$ and therefore have trivial automorphism group. Thus there are no non-trivial factor automorphisms. We extract a presentation for $\hbox{Aut}(W)$ from the one given in \cite{CG90} and \cite{An21} for the case that there are no infinite cyclic-free factors.

\smallskip
There are two types of generators for $\hbox{Aut}(W)$:

\begin{enumerate}
    \item For $\sigma\in S_n$ the automorphism $\bar\alpha_{\sigma}$ given be $\bar\alpha_\sigma(\bar x_i)=\bar x_{\sigma(i)}$. We refer to these automorphisms as permutation automorphisms.
    \item For $1\le i\neq j\le n$ the automorphism $\bar\alpha_i^j$ given by $\bar\alpha_i^j(\bar x_i)=\bar x_j\bar x_i\bar x_j^{-1}$ and $\bar\alpha_i^j(\bar x_k)=\bar x_k$ for $k\neq i$. Collins and Gilbert \cite{CG90} call these generators (among others) Fouxe-Rabinovitch generators. 
\end{enumerate}

The generators of the first type generate a subgroup isomorphic to $S_n$ which we again denote by $S_n$ and the Fouxe-Rabinovitch generators generate the Fouxe-Rabinovitch subgroup denoted by $\FR(W)$. Note that $\Aut(W)$ is generated by the $\bar\alpha_\sigma$ and $\bar\alpha_1^2$ as each $\bar\alpha_i^j$ is conjugate to $\bar\alpha_1^2$ by some $\bar\alpha_\sigma$. Using the fact that the above automorphisms do indeed give a generating set of $\Aut(W)$ we obtain a split short exact sequence 
\begin{equation}
\label{equation:aut_w}
1\to \FR(W)\to \hbox{Aut}(W)\to S_n\to 1.
\end{equation}

The following relators are sufficient to give a presentation of $\Aut(W)$ with the above generators. Note that (1) suffices to give a presentation of $S_n$, that (2)-(4) give a presentation of $\FR(W)$ by Proposition 3.1 of \cite{CG90} and that (5) describes the action of $S_n$ on $\FR(W)$.
\begin{enumerate}[label=(\arabic*)]
    \item $\bar\alpha_{\sigma} \circ \bar\alpha_{\tau} = \bar\alpha_{\sigma\tau}$ for all $\sigma,\tau\in S_n$ 
    \item $(\bar\alpha_i^j)^2 = \hbox{id}$  for all $1\le i\neq j\le n$.
    \item $\bar\alpha_i^j \circ \bar\alpha_k^l = \bar\alpha_k^l \circ \bar\alpha_i^j$ for all $1\le i,j,k,l\le n$ with $i\neq k$, $j\neq k$ and $l\neq i$  
    \item  $(\bar\alpha_i^j \circ \bar\alpha_m^j) \circ \bar\alpha_i^m = \bar\alpha_i^m \circ (\bar\alpha_i^j \circ \bar\alpha_m^j)$ for all $1\le i,j,m\le n$ with  $i,j,m$ pairwise distinct
    \item $\bar\alpha_{\sigma} \circ \bar\alpha_i^j = \bar\alpha_{\sigma(i)}^{\sigma(j)} \circ \bar\alpha_{\sigma}$ for all $1\le i\neq j\le n$ and $\sigma\in S_n$
\end{enumerate}

\section{Translation endomorphisms of \texorpdfstring{$G$}{G}}
\label{section:translation-endomorphisms}

In this section we study the structure of the group of  automorphisms of $G$ which arise from multiplying each generator by an element of $A$. To achieve it, we first deal with the monoid of such endomorphisms.

\begin{lemma}\label{newlemma} Let $a=(x_1^2)^{{z_1}} \cdots (x_n^2)^{{z_n}}\in A$. Then $(x_ia)^2=(x_i^2)^{2{z_i}+1}$
\end{lemma}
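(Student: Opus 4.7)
The plan is a direct computation using the defining relations of $G$ together with the fact that $A$ is abelian (since it is free abelian of rank $n$ by \cite[Lemma 3.1]{CL22}).

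First I would observe that the defining relations $x_i^{-1}x_j^2x_i = x_j^{-2}$ for $i\neq j$ are equivalent to $x_i x_j^2 x_i^{-1} = x_j^{-2}$ for $i\neq j$, so conjugation by $x_i$ inverts every $x_j^2$ with $j\neq i$ and (trivially) fixes $x_i^2$. In particular, writing $a = (x_1^2)^{z_1}\cdots (x_n^2)^{z_n}$, and using that $A$ is abelian so the order of the factors can be freely rearranged, one gets
\[
x_i a x_i^{-1} = (x_1^2)^{-z_1}\cdots (x_{i-1}^2)^{-z_{i-1}}(x_i^2)^{z_i}(x_{i+1}^2)^{-z_{i+1}}\cdots (x_n^2)^{-z_n}.
\]
Call this element $a'$; then $x_i a = a' x_i$.

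Next I would compute
\[
(x_i a)^2 = (x_i a)(x_i a) = (a' x_i)(x_i a) = a'\, x_i^2\, a.
\]
Since $x_i^2, a, a'$ all lie in the abelian group $A$, this equals $a'\cdot a \cdot x_i^2$. Multiplying $a'$ and $a$ factor by factor, the contributions from $j\neq i$ cancel in pairs, while the $j=i$ factor contributes $(x_i^2)^{2z_i}$. Therefore $a' a = (x_i^2)^{2z_i}$ and hence
\[
(x_i a)^2 = (x_i^2)^{2z_i}\cdot x_i^2 = (x_i^2)^{2z_i+1},
\]
as required.

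There is no real obstacle here; the only mild point to be careful about is to justify commuting the $x_j^2$'s past one another when computing $x_i a x_i^{-1}$, which is legitimate because $A$ is abelian. The argument is therefore essentially a bookkeeping exercise built on the single observation that conjugation by $x_i$ inverts every other generator's square.
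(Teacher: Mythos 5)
Your proof is correct and follows essentially the same route as the paper's: both rest on the single observation that conjugation by $x_i^{\pm 1}$ inverts $x_j^2$ for $j\neq i$, so that in $(x_ia)^2$ the off-diagonal factors cancel and only $(x_i^2)^{2z_i+1}$ survives. The only cosmetic difference is that you push $a$ to the left of $x_i$ while the paper pulls it to the right of $x_i^{-1}$.
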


\begin{proof} Note first that the two occurrences of $(x_j^2)^{z_j}$ with $j\neq i$ cancel in  
\[
(x_ia)^2 = x_i^2 x_i^{-1} \cdot (x_1^2)^{{z_1}} \cdots (x_n^2)^{{z_n}} \cdot x_i\cdot  (x_1^2)^{{z_1}} \cdots (x_n^2)^{{z_n}},
\] 
as the sign of the exponent of the second occurrence changes if it is pulled past $x_i$. Thus 
\[
(x_ia)^2=x_i^2 x_i^{-1}(x_i^2)^{z_i}x_i(x_i^2)^{z_i}=(x_i^2)^{2{z_i}+1},
\] 
which proves the claim.
\end{proof}

\begin{lemma}
\label{lemma:shift-endomorphisms}
Let $\mathfrak a=[a_{ij}]\in M_n(\mathbb Z)$ be a square integer matrix of degree $n$. For $1\le i\le n$ let $a_i := (x_1^2)^{a_{i1}} \cdots (x_n^2)^{a_{in}}\in A$. Then there exists a unique endomorphism $t_{\mathfrak a}$ of $G$ such that
\[
t_{\mathfrak a}(x_i) = x_i a_i
\]
for all $1 \leq i \leq n$.
\end{lemma}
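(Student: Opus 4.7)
The plan is to invoke the universal property of the presentation of $G$: an assignment $x_i\mapsto x_i a_i$ extends (uniquely) to an endomorphism of $G$ if and only if the images satisfy the defining relations of $G$. Thus the whole task reduces to verifying that for all $i\neq j$,
\[
(x_i a_i)^{-1}(x_j a_j)^2 (x_i a_i) = (x_j a_j)^{-2}.
\]
Uniqueness is then automatic, since any endomorphism of $G$ is determined by its values on the generating set $\{x_1,\ldots, x_n\}$.

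First I would apply Lemma \ref{newlemma} on both sides: it gives $(x_j a_j)^2=(x_j^2)^{2a_{jj}+1}$ and correspondingly $(x_j a_j)^{-2}=(x_j^2)^{-(2a_{jj}+1)}$. So the relation to check becomes
\[
(x_i a_i)^{-1}\,(x_j^2)^{2a_{jj}+1}\,(x_i a_i) = (x_j^2)^{-(2a_{jj}+1)}.
\]
Expanding the left-hand side as $a_i^{-1}\,x_i^{-1}(x_j^2)^{2a_{jj}+1}x_i\,a_i$ and using the defining relation $x_i^{-1}x_j^2x_i=x_j^{-2}$ of $G$ (valid because $i\neq j$), we see that $x_i^{-1}(x_j^2)^{k}x_i=(x_j^2)^{-k}$ for every integer $k$, so the expression collapses to $a_i^{-1}(x_j^2)^{-(2a_{jj}+1)}a_i$.

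Finally, since $A=\langle x_1^2,\ldots,x_n^2\rangle$ is a free abelian subgroup of $G$ by \cite[Lemma 3.1]{CL22}, and both $a_i$ and $(x_j^2)^{-(2a_{jj}+1)}$ lie in $A$, they commute. Hence
\[
a_i^{-1}(x_j^2)^{-(2a_{jj}+1)}a_i=(x_j^2)^{-(2a_{jj}+1)}=(x_j a_j)^{-2},
\]
which is exactly what we needed. There is no real obstacle here beyond the careful bookkeeping of exponents; the point of the proof is simply that Lemma \ref{newlemma} converts the quadratic expression $(x_j a_j)^2$ into an element of the abelian subgroup $A$, on which conjugation by $x_i a_i$ reduces to conjugation by $x_i$ and thus to inversion.
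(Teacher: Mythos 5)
Your proposal is correct and follows essentially the same route as the paper's own proof: both reduce to checking the defining relations via von Dyck's theorem, apply Lemma~\ref{newlemma} to rewrite $(x_ja_j)^{\pm 2}$ as a power of $x_j^2$, and then use the relation $x_i^{-1}x_j^2x_i=x_j^{-2}$ together with commutativity inside $A$ to conclude. No issues.
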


\begin{proof}
The uniqueness part is trivial as the $x_i$ generate $G$. Set $y_i := x_i a_i$ for $1\le i\le n$. To show that there exists an endomorphism $t_{\mathfrak a}:G\to G$ such that $t_a(x_i)=x_ia_i=y_i$ for $1\le i\le n$ it suffices to verify that $y_i^{-1} y_j^2 y_i = y_j^{-2}$ for all $i\neq j\in\{1,\ldots ,n\}$ by von Dyck's theorem. 

Let $i \neq j\in\{1,\ldots ,n\}$. It follows  from Lemma~\ref{newlemma} that
\begin{equation}
\label{equation:square-of-shift}
y_j^2 = (x_ja_j)^2 = (x_j^2)^{2a_{jj}+1} 
\end{equation}
and therefore
\[
y_i^{-1} y_j^2 y_i = a_i^{-1}x_i^{-1}(x_j^2)^{2a_{jj}+1}x_ia_i=a_i^{-1}(x_j^2)^{-(2a_{jj}+1)}a_i=a_i^{-1}y_j^{-2}a_i= y_j^{-2},
\]
which proves the claim.
\end{proof}

We call the map $t_{\mathfrak a}$ from Lemma \ref{lemma:shift-endomorphisms} a \emph{translation} or \emph{shift} endomorphism of $G$.

\medskip Let $M = M_n(\Z)$. We define a binary operation 
\[
* \colon M \times M \to M, (\mathfrak a,\mathfrak b)\mapsto \mathfrak a*\mathfrak b
\] 
on $M$ by setting $[a_{ij}] * [b_{ij}]$ to be the matrix $[c_{ij}]$ where 
\begin{equation}
\label{equation:monoid-action2}
c_{ij} = a_{ij}+(1+2a_{jj})b_{ij}
\end{equation} for $1\le i,j\le n$.

\begin{lemma}
\label{lemma:representation-of-monoid}
Let $\mathfrak a,\mathfrak b\in M$. Then $t_{\mathfrak a}\circ t_{\mathfrak b}=t_{\mathfrak a*\mathfrak b}$.    
\end{lemma}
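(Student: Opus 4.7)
The plan is to verify the equality of the two endomorphisms by checking that they agree on each generator $x_i$, which suffices by the uniqueness part of Lemma~\ref{lemma:shift-endomorphisms}.

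First I would unfold the left-hand side:
\[
(t_{\mathfrak a}\circ t_{\mathfrak b})(x_i) = t_{\mathfrak a}(x_i b_i) = t_{\mathfrak a}(x_i)\cdot t_{\mathfrak a}(b_i) = x_i a_i \cdot t_{\mathfrak a}(b_i),
\]
reducing the problem to computing $t_{\mathfrak a}(b_i)$. Since $b_i=\prod_{k}(x_k^2)^{b_{ik}}$ and $t_{\mathfrak a}$ is a homomorphism, this equals $\prod_{k}((x_ka_k)^2)^{b_{ik}}$. The central ingredient is then equation~\eqref{equation:square-of-shift} from the proof of Lemma~\ref{lemma:shift-endomorphisms}, namely $(x_ka_k)^2 = (x_k^2)^{2a_{kk}+1}$, which is precisely what produces the coefficient $1+2a_{kk}$ appearing in the definition of $*$.

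After substituting, $t_{\mathfrak a}(b_i) = \prod_k (x_k^2)^{(2a_{kk}+1)b_{ik}}$, and therefore
\[
(t_{\mathfrak a}\circ t_{\mathfrak b})(x_i) = x_i\cdot\prod_{k}(x_k^2)^{a_{ik}}\cdot\prod_k (x_k^2)^{(2a_{kk}+1)b_{ik}}.
\]
Since $A$ is abelian by \cite[Lemma 3.1]{CL22}, both factors on the right lie in $A$ and combine by adding exponents componentwise, giving $x_i\prod_k(x_k^2)^{c_{ik}}$ with $c_{ik}=a_{ik}+(1+2a_{kk})b_{ik}$. This is exactly $t_{\mathfrak a*\mathfrak b}(x_i)$ by \eqref{equation:monoid-action2}.

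I do not expect any genuine obstacle; the argument is essentially bookkeeping, with the only insight being the reuse of \eqref{equation:square-of-shift} to tame $(x_ka_k)^2$. The small technical point I would flag is the sign of $b_{ik}$: when $b_{ik}<0$, the identity $((x_ka_k)^2)^{b_{ik}} = (x_k^2)^{(2a_{kk}+1)b_{ik}}$ still holds because inverting $(x_k^2)^{2a_{kk}+1}$ just negates its exponent, so the formula is valid without any case split.
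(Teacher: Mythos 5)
Your proposal is correct and follows essentially the same route as the paper: verify equality on each generator $x_i$, expand $t_{\mathfrak a}(x_ib_i)$, apply the identity $(x_ka_k)^2=(x_k^2)^{2a_{kk}+1}$ (the paper cites Lemma~\ref{newlemma} directly, you cite its instance \eqref{equation:square-of-shift}), and collect exponents in the abelian group $A$ to recover \eqref{equation:monoid-action2}. Your remark about negative exponents $b_{ik}$ is a harmless extra precaution; no gap here.
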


\begin{proof}Let $\mathfrak a=[a_{ij}],\mathfrak b=[b_{ij}]\in M$ and $\mathfrak c=[c_{ij}]:=\mathfrak a*\mathfrak b$. We need to show that $t_{\mathfrak a}\circ t_{\mathfrak b}=t_{\mathfrak c}$. Using the definition of $t_{\mathfrak a}$ and $t_{\mathfrak c}$ and Lemma~\ref{newlemma} we obtain 
\begin{align*}
t_{\mathfrak a}\circ t_{\mathfrak b}(x_i) &=t_{\mathfrak a}(t_{\mathfrak b}(x_i))=t_{\mathfrak a}(x_ib_i)\\
&=t_{\mathfrak a}(x_i(x_1^2)^{b_{i1}}(x_1^2)^{b_{i2}}\cdots (x_n^2)^{b_{in}})\\
&=t_{\mathfrak a}(x_i)t_{\mathfrak a}(x_1^2)^{b_{i1}}\cdots t_{\mathfrak a}(x_n^2)^{b_{in}}\\
&=x_ia_i((x_1^2)^{2a_{11}+1})^{b_{i1}}\cdots ((x_n^2)^{2a_{nn}+1})^{b_{in}}\\
&=x_i(x_1^2)^{a_{i1}}\cdots (x_n^2)^{a_{in}}(x_1^2)^{(2a_{11}+1)b_{i1}}\cdots (x_n^2)^{(2a_{nn}+1){b_{in}}}\\
&= x_i(x_1^2)^{(2a_{11}+1)b_{i1}+a_{i1}}\cdots (x_n^2)^{(2a_{nn}+1)b_{in}+a_{in}}=t_{\mathfrak c}(x_i)
\end{align*}
for $1\le i\le n$,  which proves that $t_{\mathfrak a}\circ t_{\mathfrak b}=t_{\mathfrak c}$.
\end{proof}

\begin{corollary}
$(M,*)$ is monoid with identity element the zero matrix. Moreover the map 
\[
t \colon M\to \hbox{End}(G),\ \mathfrak a\mapsto t_{\mathfrak a}
\] 
is a monoid monomorphism.
\end{corollary}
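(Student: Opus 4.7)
The plan is to bootstrap from Lemma~\ref{lemma:representation-of-monoid}: rather than verifying associativity of $*$ by a direct (and unpleasant) computation with the formula \eqref{equation:monoid-action2}, I will first prove that the map $t$ is injective, and then transport associativity from the already-associative monoid $(\End(G),\circ)$ back to $(M,*)$.

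First I will verify the identity axiom by direct substitution into \eqref{equation:monoid-action2}. Writing $\mathbf{0}$ for the zero matrix: setting $a_{ij}=0$ in the formula gives $c_{ij}=0+(1+0)b_{ij}=b_{ij}$, so $\mathbf{0}*\mathfrak b=\mathfrak b$; setting $b_{ij}=0$ gives $c_{ij}=a_{ij}+(1+2a_{jj})\cdot 0=a_{ij}$, so $\mathfrak a*\mathbf{0}=\mathfrak a$. Note also that $t_{\mathbf 0}(x_i)=x_i\cdot 1=x_i$ for all $i$, hence $t_{\mathbf 0}=\mathrm{id}_G$, which will give preservation of the identity once $(M,*)$ is known to be a monoid.

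Next I establish that $t$ is injective. Suppose $t_{\mathfrak a}=t_{\mathfrak b}$ for $\mathfrak a=[a_{ij}],\mathfrak b=[b_{ij}]\in M$. Evaluating on the generator $x_i$ yields $x_ia_i=x_ib_i$ in $G$, hence $a_i=b_i$ for every $i$. Because $A$ is a free abelian group of rank $n$ with basis $x_1^2,\ldots,x_n^2$ by \cite[Lemma~3.1]{CL22}, the equality $(x_1^2)^{a_{i1}}\cdots(x_n^2)^{a_{in}}=(x_1^2)^{b_{i1}}\cdots(x_n^2)^{b_{in}}$ forces $a_{ij}=b_{ij}$ for all $j$, so $\mathfrak a=\mathfrak b$.

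Finally I deduce associativity of $*$ and assemble the conclusion. For any $\mathfrak a,\mathfrak b,\mathfrak c\in M$, applying Lemma~\ref{lemma:representation-of-monoid} twice gives
\[
t_{(\mathfrak a*\mathfrak b)*\mathfrak c}=t_{\mathfrak a*\mathfrak b}\circ t_{\mathfrak c}=(t_{\mathfrak a}\circ t_{\mathfrak b})\circ t_{\mathfrak c}=t_{\mathfrak a}\circ(t_{\mathfrak b}\circ t_{\mathfrak c})=t_{\mathfrak a}\circ t_{\mathfrak b*\mathfrak c}=t_{\mathfrak a*(\mathfrak b*\mathfrak c)},
\]
so the injectivity step forces $(\mathfrak a*\mathfrak b)*\mathfrak c=\mathfrak a*(\mathfrak b*\mathfrak c)$. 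Together with the identity check, this shows $(M,*)$ is a monoid, and Lemma~\ref{lemma:representation-of-monoid} together with $t_{\mathbf 0}=\mathrm{id}_G$ shows $t$ is a monoid homomorphism; injectivity then upgrades it to a monomorphism. There is no serious obstacle here, only the mild subtlety that associativity for $*$ is not being proved directly from its formula but via the faithful realisation inside $\End(G)$.
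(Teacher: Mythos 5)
Your proof is correct and is essentially the argument the paper intends: the corollary is stated without proof as an immediate consequence of Lemma~\ref{lemma:representation-of-monoid}, and your route --- checking the zero matrix is a two-sided identity, proving injectivity of $t$ from the freeness of $A$ on $x_1^2,\ldots,x_n^2$, and transporting associativity back from $(\End(G),\circ)$ --- is the natural way to fill in the details. No gaps.
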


Note that $t(M)$ is a submonoid of $\End(G)$ consisting of the translation endomorphisms of $G$.

\begin{remark}
\label{remark:decomposition-of-the-monoid}
Let $\mathfrak a= [a_{ij}] \in M$. Let
\[
\mathfrak a_0 = \begin{bmatrix}
     0 & a_{21} & \ldots & a_{1n}\\
a_{21} &      0 & \ldots & a_{2n}\\
\vdots & \vdots & \ddots & \vdots\\
a_{n1} & a_{n2} & \ldots &      0
\end{bmatrix}
\text{ and }
\mathfrak a_d = 
\begin{bmatrix}
a_{11} &      0 & \ldots &      0\\
     0 & a_{22} & \ldots &      0\\
\vdots & \vdots & \ddots & \vdots\\
     0 &      0 & \ldots & a_{nn}
\end{bmatrix}.
\]
By definition of the action in $M$ we get
\[
\mathfrak a = \mathfrak a_0 * \mathfrak a_d.
\]
Hence we have $M = M_0 * M_d$, where $M_0 := \{ \mathfrak a_0 : \mathfrak a \in M \}$ and $M_d := \{ \mathfrak a_d : \mathfrak a \in M \}$.
\end{remark}

\begin{lemma}
\label{lemma:invertitbles-in-monoid}
The set underlying the group of units of $M$ is  
\[
M^* = \left\{ [a_{ij}] \in M : \forall_{1 \leq i \leq n} \; a_{ii} \in \{0,-1\} \right\}.
\]
Moreover $t(M^*)$ is the group of all shift automorphisms of $G$.
\end{lemma}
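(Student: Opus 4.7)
The plan is to handle the two claims somewhat independently. First, I would characterise $M^*$ by an elementary divisibility argument applied to the diagonal entries. Suppose $\mathfrak a=[a_{ij}]$ has a right inverse $\mathfrak b=[b_{ij}]$ in $(M,*)$; then \eqref{equation:monoid-action2} with $i=j$ reads
\[
a_{ii} + (1 + 2a_{ii})\, b_{ii} = 0.
\]
Since $\gcd(1+2a_{ii},a_{ii})$ divides $(1+2a_{ii})-2a_{ii}=1$, these integers are coprime, so the displayed relation forces $1+2a_{ii}=\pm 1$, that is $a_{ii}\in\{0,-1\}$. Conversely, assuming this diagonal condition and writing $\epsilon_j:=1+2a_{jj}\in\{\pm 1\}$, I would define $\mathfrak b=[b_{ij}]$ by $b_{ij}:=-\epsilon_j a_{ij}$ and verify both $\mathfrak a*\mathfrak b=0$ and $\mathfrak b*\mathfrak a=0$ directly, using $\epsilon_j^2=1$ together with the easy observation that $b_{jj}=a_{jj}$ (so $1+2b_{jj}=\epsilon_j$ as well).

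The second claim matches this algebraic description with the geometric notion of a shift automorphism. The inclusion $t(M^*)\subseteq\Aut(G)$ is immediate, as the monoid monomorphism $t$ sends units to units. For the converse, suppose $\mathfrak a\in M$ is such that $t_{\mathfrak a}\in\Aut(G)$. By Lemma~\ref{newlemma}, $t_{\mathfrak a}(x_i^2)=(x_i^2)^{2a_{ii}+1}$, so $t_{\mathfrak a}(A)\subseteq A$. Since $t_{\mathfrak a}$ induces the identity on $W=G/A$, the surjectivity of $t_{\mathfrak a}$ on $G$ forces $t_{\mathfrak a}(A)=A$; hence $t_{\mathfrak a}|_A$ is an automorphism of $A\cong\Z^n$ whose matrix in the basis $(x_1^2,\ldots,x_n^2)$ is $\diag(2a_{11}+1,\ldots,2a_{nn}+1)$. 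Each diagonal entry is then a unit of $\Z$, which gives $a_{ii}\in\{0,-1\}$ and $\mathfrak a\in M^*$.

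I do not anticipate a genuine obstacle: the whole argument hinges on the single observation that the diagonal part of $\mathfrak a*\mathfrak b=0$ forces $1+2a_{ii}\in\{\pm 1\}$, after which both halves of the lemma reduce to direct manipulations.
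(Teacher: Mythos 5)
Your proof is correct. The first half — extracting $a_{ii}\in\{0,-1\}$ from the diagonal of the right-inverse equation and then exhibiting the explicit two-sided inverse $b_{ij}=-\epsilon_j a_{ij}$ — is essentially the paper's argument (the paper solves $b_{jj}=-a_{jj}/(1+2a_{jj})$ and demands integrality, which is the same divisibility observation), though you are more careful in actually verifying that $\mathfrak b$ is also a left inverse, a point the paper dismisses with ``obviously.'' Where you genuinely diverge is the second claim. The paper invokes the decomposition $\mathfrak a=\mathfrak a_0*\mathfrak a_d$ of Remark~\ref{remark:decomposition-of-the-monoid}: since $\mathfrak a_0$ is automatically a unit, $t_{\mathfrak a_d}=(t_{\mathfrak a_0})^{-1}t_{\mathfrak a}$ is an automorphism, and the paper then asserts (without further argument) that this forces $\mathfrak a_d\in M^*$. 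You instead argue directly that any shift automorphism preserves $A$, restricts to an automorphism of $A\cong\Z^n$ with matrix $\diag(2a_{11}+1,\ldots,2a_{nn}+1)$, and hence has unit diagonal entries. Your route avoids the $M_0*M_d$ decomposition entirely and, in effect, supplies the justification that the paper's final assertion leaves implicit; the paper's route, on the other hand, stays purely inside the monoid $(M,*)$ and reuses machinery it has already set up for Proposition~\ref{proposition:shift-automorphisms}. Both are valid; yours is the more self-contained of the two.
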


\begin{proof}
Assume that $\mathfrak b = [b_{ij}]$ is a right inverse of $\mathfrak a = [a_{ij}]$. Let $1 \leq j \leq n$. Formula \eqref{equation:monoid-action2} shows in particular that
\[
b_{jj} = -\frac{a_{jj}}{1+2a_{jj}},
\]
which is an integer if and only if $a_{jj} \in \{0,-1\}$. We get that
\[
b_{ij} = \left\{
\begin{array}{rl}
-a_{ij} & \text{ if } a_{jj} = 0,\\
 a_{ij} & \text{ if } a_{jj} = -1.
\end{array}
\right.
\]
Obviously, $\mathfrak b$ is also a left inverse for $\mathfrak a$.

Now, let $t_{\mathfrak a}$ be a shift automorphism, for some $\mathfrak a \in M$. Write $\mathfrak a=\mathfrak a_0 *\mathfrak a_d$ as in Remark \ref{remark:decomposition-of-the-monoid}. By the above argument $\mathfrak a_0\in M^*$, thus $t_{\mathfrak a_0}$ is an automorphism.  Lemma \ref{lemma:representation-of-monoid} implies that $t_{\mathfrak a}=t_{\mathfrak a_0}t_{\mathfrak a_d}$, thus $t_{\mathfrak a_d} = (t_{\mathfrak a_0})^{-1}t_{\mathfrak a}$ is an automorphism of $G$. This can only happen if $\mathfrak a_d \in M^*$. Thus $\mathfrak a =\mathfrak a_0 *\mathfrak a_d\in M^*$ as desired.
\end{proof}

Let $M^*_d := M^* \cap M_d$. Using again the formula \eqref{equation:monoid-action2} one gets that the action $*$ restricted to $M_0$ is simply addition, hence 
\[
M_0 \cong\langle \{\varepsilon_{ij}:1\le i\neq j\le n\}\mid \{[\varepsilon_{ij},\varepsilon_{kl}]:1\le i\neq j,k\neq l\le n\}\rangle\cong \Z^{n(n-1)},
\] 
where $\varepsilon_{ij}$ is the matrix with entry $1$ in the $i$-th row and $j$-th column and zero elsewhere. Moreover 
\[
M^*_d \cong\langle \delta_1,\ldots,\delta_n\mid \delta_i^2, [\delta_i,\delta_j]\hbox{ for }1\le i,j\le n\rangle \cong C_2^n,
\] 
where $\delta_i$ is the matrix that has only zero entries except for a $-1$ entry in the $i$-th row and $i$-th column.

\smallskip
By Remark \ref{remark:decomposition-of-the-monoid} and Lemma \ref{lemma:invertitbles-in-monoid} we have $M^* = M_0 M^*_d$. Note that the conjugation of an element of $M_0$ by an element of $M^*_d$ may result only in a sign change in some columns. 
To be more precise, if $\mathfrak a = [ a^{(1)} \ldots a^{(n)} ] \in M_0$, where $a^{(i)}$ is the $i$-th column of $\mathfrak a$, then
\[
\delta_k * \mathfrak a * \delta_k^{-1} = 
\begin{bmatrix}
a^{(1)} & \ldots & a^{(k-1)} & -a^{(k)} & a^{(k+1)} & \ldots & a^{(n)}
\end{bmatrix}.
\]
Hence, we get
\begin{proposition}
\label{proposition:shift-automorphisms}
$M^*$ is a semidirect product of $M_0$ and $M^*_d$. A (relative) presentation of $M^*$ is given by 
\[
\langle M_0,M_d*\mid \{\delta_i\varepsilon_{ji}\delta_i=-\varepsilon_{ji}: 1\le i,j\le n\}\cup\{\delta_i\varepsilon_{jk}\delta_i=\varepsilon_{jk}: 1\le i,j,k\le n, k\neq i\}\rangle.
\]
\end{proposition}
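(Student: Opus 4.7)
The plan is to derive the proposition as a direct consequence of the structural information already assembled. First I would establish the internal semidirect product decomposition by verifying the three standard conditions. The factorization $M^*=M_0 M^*_d$ is given by Remark \ref{remark:decomposition-of-the-monoid} together with Lemma \ref{lemma:invertitbles-in-monoid}, which tells us that every $\mathfrak a\in M^*$ decomposes as $\mathfrak a_0*\mathfrak a_d$ with $\mathfrak a_0\in M_0$ and $\mathfrak a_d\in M^*_d$. The intersection $M_0\cap M^*_d$ is trivial because every matrix in $M_0$ has zero diagonal while a non-identity matrix in $M^*_d$ has at least one diagonal entry equal to $-1$, so both diagonals coincide only for the zero matrix. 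Normality of $M_0$ in $M^*$ follows from the explicit conjugation formula stated just before the proposition: conjugating a matrix $\mathfrak a\in M_0$ by $\delta_k$ only flips the sign of the $k$-th column, which stays in $M_0$. Since $M^*_d$ is generated by the $\delta_k$, this suffices.

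Next I would derive the presentation. Having established $M^*=M_0\rtimes M^*_d$, I would invoke the standard fact that whenever a group is a semidirect product $N\rtimes H$, a presentation is obtained from presentations of $N$ and $H$ together with a family of relators expressing, for each generator $h$ of $H$ and each generator $n$ of $N$, how $h$ conjugates $n$. Since the proposition asks for a \emph{relative} presentation built on the entire sets $M_0$ and $M^*_d$, I do not even need to unwind the internal presentations of $M_0\cong\Z^{n(n-1)}$ and $M^*_d\cong C_2^n$ (already recorded just before the proposition); I only need the conjugation relations for every pair $(\delta_i,\varepsilon_{jk})$.

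Finally I would verify that the relations listed in the proposition are exactly the conjugation relations. The matrix $\varepsilon_{jk}$ has its unique non-zero entry in column $k$; conjugation by $\delta_i$ flips the sign of column $i$ and leaves the remaining columns fixed. Hence $\delta_i\varepsilon_{jk}\delta_i^{-1}=\varepsilon_{jk}$ when $k\neq i$, and $\delta_i\varepsilon_{ji}\delta_i^{-1}=-\varepsilon_{ji}$ when $k=i$. (Recall that $\delta_i^{-1}=\delta_i$.) These are precisely the two families of relators in the statement, so the relative presentation of the semidirect product coincides with the one claimed.

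The arguments are essentially bookkeeping and the only mild obstacle is conceptual rather than technical: one must be comfortable passing from an abstract semidirect product presentation theorem to the relative form used here, and one must be careful that the two families of conjugation relations together cover \emph{all} pairs $(\delta_i,\varepsilon_{jk})$ with $1\le i\le n$ and $1\le j\neq k\le n$, so that no conjugation relation is missing.
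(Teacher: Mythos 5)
Your argument is correct and follows essentially the same route as the paper, whose ``proof'' of this proposition is precisely the discussion preceding it: the factorization $M^*=M_0M^*_d$ from Remark~\ref{remark:decomposition-of-the-monoid} and Lemma~\ref{lemma:invertitbles-in-monoid}, the identification of $M_0$ and $M^*_d$ as $\Z^{n(n-1)}$ and $C_2^n$, and the column-sign conjugation formula. You merely make explicit the routine checks (trivial intersection, normality, completeness of the conjugation relators) that the paper leaves implicit.
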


\section{Automorphisms of \texorpdfstring{$G$}{G}}

We begin with a simple lemma that provides sufficient conditions for a subgroup to be characteristic. We denote the centralizer of an element $g\in G$ by $C_G(g)$.

\begin{lemma}
\label{lemma:characteristic}
Let $H$ be a group containing a normal subgroup $N$. Assume that:
\begin{enumerate}[label=(\arabic*)]
    \item \label{item:sol1} $N$ is solvable;
    \item \label{item:sol2} for every non-trivial element $\bar h \in H/N$, $C_{H/N}(\bar h)$ is solvable;
    \item \label{item:nsol} for every element $a \in N$, $C_{H}(a)$ is non-solvable.
\end{enumerate}
Then $N$ is a characteristic subgroup of $H$.
\end{lemma}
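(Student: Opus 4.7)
The plan is to show that $\varphi(N)\subseteq N$ for every $\varphi\in\Aut(H)$; applying the same conclusion to $\varphi^{-1}$ then yields the reverse inclusion, establishing characteristicity. So fix $\varphi\in\Aut(H)$ and an arbitrary $a\in N$, and aim to prove $\varphi(a)\in N$.

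The main idea is to read off a solvability obstruction from the centralizer of $\varphi(a)$. By hypothesis \ref{item:nsol}, $C_H(a)$ is non-solvable, and since $\varphi$ is an automorphism, $\varphi(C_H(a))=C_H(\varphi(a))$ is non-solvable as well. I would then argue by contradiction: assume $\varphi(a)\notin N$, so that $\bar h:=\overline{\varphi(a)}$ is a non-trivial element of $H/N$.

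Apply the quotient map $\pi\colon H\to H/N$ to the subgroup $C_H(\varphi(a))$. Its image is contained in $C_{H/N}(\bar h)$, which is solvable by hypothesis \ref{item:sol2}, while its kernel $C_H(\varphi(a))\cap N$ is a subgroup of $N$ and hence solvable by hypothesis \ref{item:sol1}. An extension of a solvable group by a solvable group is solvable, so $C_H(\varphi(a))$ would itself be solvable, contradicting what we noted above. Therefore $\varphi(a)\in N$, as required.

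There is no real obstacle here: once one writes down the three conditions, they are essentially tailored to give exactly this centralizer dichotomy, and the only mildly delicate point is to remember that subgroups and extensions of solvable groups are solvable, so that both the kernel and image analyses yield solvability of the whole centralizer. The lemma will be applied later with $H=G$ and $N=A$, where $A$ is abelian (so \ref{item:sol1} is trivial), $H/N=W$ is a free product of $C_2$'s in which centralizers of non-trivial elements are virtually cyclic (so \ref{item:sol2} holds), and one must separately verify the non-solvability of $C_G(a)$ for $a\in A$; but those verifications are not part of the present statement.
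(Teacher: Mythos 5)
Your proof is correct and is essentially the same argument as the paper's: both derive a contradiction by exhibiting the non-solvable centralizer $C_H(\varphi(a))$ as an extension of the solvable group $C_H(\varphi(a))\cap N$ by a subgroup of the solvable group $C_{H/N}(\bar h)$. The only cosmetic difference is that the paper starts from an element of $\alpha(N)\setminus N$ while you start from $a\in N$ and track $\varphi(a)$; your explicit remark that applying the conclusion to $\varphi^{-1}$ gives equality $\varphi(N)=N$ is a nice touch.
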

\begin{proof}
Suppose there exists $\alpha \in \Aut(H)$ such that $\alpha(N)\not\subset N$. Let $a \in \alpha(N) \setminus N$. Clearly \ref{item:nsol} implies that $C_H(a)$ is non-solvable as $a$ is the image of some element of $N$ under an automorphism of $H$.

We obtain the short exact sequence
\[
1 \longrightarrow N \cap C_H(a) \longrightarrow C_H(a) \longrightarrow N C_H(a) / N \longrightarrow 1.
\]
By our assumption, $a N\in H/N$ is non-trivial. Moreover $N C_H(a) / N$ is a subgroup of $C_{H/N}(aN)$ and therefore solvable by \ref{item:sol2}.
By \ref{item:sol1} $N\cap C_H(a)$ is solvable. Thus $C_H(a)$ is an extension of solvable groups and therefore solvable, a contradiction.

We have shown that $\alpha(N)\subset N$ for all $\alpha\in\hbox{Aut}(H)$. Thus $N$ is characteristic.
\end{proof}

\begin{corollary}
\label{corollary:characteristic}
Assume $n \geq 3$. Then $A$ is a characteristic subgroup of $G$.
\end{corollary}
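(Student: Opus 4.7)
The plan is to apply Lemma \ref{lemma:characteristic} with $H=G$ and $N=A$. Condition \ref{item:sol1} is immediate since $A\cong\Z^n$ is abelian. For condition \ref{item:sol2}, $W=C_2^{*n}$ is a free product of finite groups; acting on its Bass--Serre tree, every non-trivial element of $W$ is either elliptic (conjugate to some $\bar x_i$, with centralizer $\cong C_2$) or hyperbolic (with centralizer the infinite cyclic translation subgroup of its axis). Either way $C_W(\bar h)$ is cyclic and hence solvable.

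The substantive step is condition \ref{item:nsol}. The defining relations $x_i^{-1}x_j^2x_i=x_j^{-2}$ show that conjugation by $\bar x_i$ fixes $x_i^2$ and inverts every other $x_j^2$; these diagonal $\pm 1$-matrices pairwise commute, so the induced action $W\to\Aut(A)$ factors through $W^{\mathrm{ab}}\cong(\Z/2)^n$. In particular $[W,W]$ acts trivially on $A$, so the full preimage of $[W,W]$ in $G$ is contained in $C_G(a)$ for every $a\in A$. Since solvability passes to quotients and this preimage surjects onto $[W,W]$, the whole verification reduces to showing that $[W,W]$ is non-solvable.

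This last claim is where the hypothesis $n\geq 3$ enters. Because each torsion element of $W$ is conjugate to some $\bar x_i$, and $\bar x_i$ has non-trivial image in $W^{\mathrm{ab}}$, the subgroup $[W,W]$ is torsion-free; by Bass--Serre theory applied to its action on the Bass--Serre tree of the free product, it is therefore free. A quick Euler characteristic count gives its rank as $1+2^{n-1}(n-2)\geq 5$ for $n\geq 3$, but one may avoid this by simply observing that the two elements $(\bar x_1\bar x_2)^2$ and $(\bar x_1\bar x_3)^2$ lie in $[W,W]$ (their images in $W^{\mathrm{ab}}$ vanish) and freely generate a rank-two subgroup by a standard ping-pong argument on the Bass--Serre tree; either way, $[W,W]$ contains a non-abelian free group and is non-solvable. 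The main hurdle is producing, uniformly in $a$, enough elements of $G$ commuting with $a$ to rule out solvability of $C_G(a)$; the reduction through $[W,W]$ does this at one stroke, bypassing any case analysis on the support of $a$.
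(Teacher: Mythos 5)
Your proof is correct and follows the paper's skeleton exactly: apply Lemma \ref{lemma:characteristic} with $H=G$, $N=A$, and verify the three hypotheses. Conditions \ref{item:sol1} and \ref{item:sol2} are handled the same way (the paper simply quotes the fact that centralizers of non-trivial elements of a free product of cyclic groups are cyclic, where you supply the Bass--Serre justification). The genuine divergence is in condition \ref{item:nsol}: the paper disposes of it in one line by citing \cite[Proposition 6]{PS22}, which states that the kernel of the action of $W$ on $A$ is a non-abelian free group, whereas you prove a sufficient substitute from scratch. Your observation that the action $W\to\Aut(A)$ is by commuting diagonal matrices, hence factors through $W^{\mathrm{ab}}\cong(\Z/2)^n$, correctly places the full preimage of $[W,W]$ inside $C_G(a)$ for every $a\in A$, and your reduction to non-solvability of $[W,W]$ via quotients is sound; the identification of $[W,W]$ as a non-abelian free group for $n\ge 3$ (either by the Euler characteristic count, which is right, or by exhibiting $(\bar x_1\bar x_2)^2$ and $(\bar x_1\bar x_3)^2$ as free generators of a rank-two subgroup) is also correct. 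What your route buys is self-containedness: the corollary no longer depends on an external structural result about the kernel of the $W$-action. A minor remark: $[W,W]$ need not equal that kernel (for odd $n$ the diagonal representation of $W^{\mathrm{ab}}$ is not faithful, so the kernel is strictly larger), but since you only need \emph{some} non-solvable subgroup of $C_G(a)$, this does not affect the argument.
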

\begin{proof}
We apply Lemma \ref{lemma:characteristic} with $H=G$ and $N=A$. Clearly \ref{item:sol1} is satisfied as $A$ is abelian. \ref{item:sol2} holds as the centralizer of any element in a free product of cyclic groups is cyclic. To verify \ref{item:nsol} it is enough to notice, that by \cite[Proposition 6]{PS22} the kernel of the action of $W$ on $A$, defined by 
\begin{equation}
\label{equation:action-of-w}
\forall_{w \in G} \forall_{a \in A} \bar{w} \cdot a = waw^{-1},
\end{equation}
is a non-abelian free group. Hence, $G$ itself contains a non-abelian free group which centralizes any element of $A$.
\end{proof}

\begin{remark}
The outer automorphism group of $G_2$ was computed in \cite{Zi90}. Although the author is not directly interested in calculating $\Aut(G_2)$, some of his intermediate results can be used to compute it, using for example tools developed in \cite{Lu13}. 
\end{remark}

Using the notation of Section \ref{section:translation-endomorphisms} we state the following theorem:

\begin{theorem}
\label{theorem:aut-w-1}
Let $n \geq 3$. We have the following short exact sequence:
\[
1 \longrightarrow M^* \stackrel{t}{\longrightarrow} \Aut(G) \stackrel{\pi}{\longrightarrow} \Aut(W) \longrightarrow 1.
\]
\end{theorem}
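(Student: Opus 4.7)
The plan is to establish the three required properties: existence and well-definedness of $\pi$, exactness at $\Aut(G)$, and surjectivity of $\pi$. Injectivity of $t$ is already in hand from Lemma~\ref{lemma:representation-of-monoid} and the following corollary.

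First I would define $\pi$. By Corollary~\ref{corollary:characteristic}, $A$ is a characteristic subgroup of $G$, so each $\alpha\in\Aut(G)$ preserves $A$ and descends to an automorphism $\pi(\alpha)$ of $W=G/A$; this gives a homomorphism $\pi\colon\Aut(G)\to\Aut(W)$. Exactness at $\Aut(G)$ is then an essentially formal computation. For $t(M^*)\subseteq\ker\pi$, observe that every shift automorphism $t_{\mathfrak a}$ satisfies $t_{\mathfrak a}(x_i)=x_ia_i$ with $a_i\in A$, so $t_{\mathfrak a}$ induces the identity modulo $A$. Conversely, if $\alpha\in\ker\pi$ then $\alpha(x_i)\in x_iA$ for each $i$; writing $\alpha(x_i)=x_i(x_1^2)^{a_{i1}}\cdots(x_n^2)^{a_{in}}$ we obtain a matrix $\mathfrak a=[a_{ij}]\in M$, and by Lemma~\ref{lemma:shift-endomorphisms} the endomorphism $t_{\mathfrak a}$ agrees with $\alpha$ on the generators of $G$, hence $\alpha=t_{\mathfrak a}$. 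Since $\alpha$ is an automorphism, Lemma~\ref{lemma:invertitbles-in-monoid} forces $\mathfrak a\in M^*$.

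The substantive step is surjectivity of $\pi$, which I would prove by lifting each of the two types of generators of $\Aut(W)$ described in Section~\ref{section:presentation-of-autw}. For a permutation $\sigma\in S_n$, define $\alpha_\sigma\colon G\to G$ on generators by $\alpha_\sigma(x_i)=x_{\sigma(i)}$. Because the defining relations $x_i^{-1}x_j^2x_i=x_j^{-2}$ are indexed symmetrically by unordered pairs $\{i,j\}$ with $i\neq j$, applying $\sigma$ yields the relations $x_{\sigma(i)}^{-1}x_{\sigma(j)}^2x_{\sigma(i)}=x_{\sigma(j)}^{-2}$, which again hold in $G$. Von Dyck's theorem then produces the endomorphism $\alpha_\sigma$, and $\alpha_\sigma\alpha_{\sigma^{-1}}=\hbox{id}$ shows it is an automorphism lifting $\bar\alpha_\sigma$. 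For a Fouxe-Rabinovitch generator $\bar\alpha_i^j$, define $\alpha_i^j\colon G\to G$ by $\alpha_i^j(x_i)=x_jx_ix_j^{-1}$ and $\alpha_i^j(x_k)=x_k$ for $k\neq i$. The key computation is
\[
(x_jx_ix_j^{-1})^2 = x_jx_i^2x_j^{-1} = x_i^{-2},
\]
using the relation $x_jx_i^2x_j^{-1}=x_i^{-2}$. With this identity the defining relations split into routine cases: when $k\neq i$ and $l\neq i$ they are preserved trivially; when $l=i$ and $k\neq i$, $k\neq j$ (resp.\ $k=j$) one verifies $x_k^{-1}x_i^{-2}x_k=x_i^2$ directly; and when $k=i$ one uses the relations twice to reduce $x_jx_i^{-1}x_j^{-1}x_l^2x_jx_ix_j^{-1}$ to $x_l^{-2}$. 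Hence $\alpha_i^j$ is an endomorphism, and the analogous map $\beta_i^j$ defined with $x_j^{-1}x_ix_j$ in place of $x_jx_ix_j^{-1}$ is a two-sided inverse on generators, so $\alpha_i^j\in\Aut(G)$ and $\pi(\alpha_i^j)=\bar\alpha_i^j$.

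The main obstacle is the verification that the proposed lifts $\alpha_i^j$ of the Fouxe-Rabinovitch generators actually respect the defining relations of $G$; once the identity $(x_jx_ix_j^{-1})^2=x_i^{-2}$ is isolated, the remaining checks are a short case analysis. Given the two families of lifts, the generators of $\Aut(W)$ from \eqref{equation:aut_w} lie in the image of $\pi$, so $\pi$ is surjective, completing the short exact sequence.
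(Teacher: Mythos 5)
Your proposal is correct and follows essentially the same route as the paper: identify $\ker\pi$ with $t(M^*)$ via the translation endomorphisms, then prove surjectivity by lifting the permutation and Fouxe--Rabinovitch generators of $\Aut(W)$ and verifying the defining relations of $G$ through the key identity $(x_jx_ix_j^{-1})^2=x_i^{-2}$. The only cosmetic difference is that the paper packages $\alpha_i^j$ and its inverse as a single family $\alpha_\eta$ with $\eta=\pm1$, and you spell out the well-definedness of $\pi$ via Corollary~\ref{corollary:characteristic} and the kernel computation in slightly more detail.
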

\begin{proof}
By the description given in Section \ref{section:translation-endomorphisms}, the restriction of $t$  to $M^*$ is a monomorphism. Since any element of the kernel of $\pi$ must be a translation automorphism, it follows that $t(M^*) = \ker \pi$.

We are left to show that every automorphism of $W$ can be lifted to an automorphism of $G$, it clearly suffices to show that a generating set of $\Aut(W)$ can be lifted. By the description in Section \ref{section:presentation-of-autw}, $\Aut(W)$ is generated by $\bar\alpha_\sigma$ for $\sigma \in S_n$ and $\bar\alpha_i^j$ for $i\neq j$. The automorphisms $\bar\alpha_\sigma\in\Aut(W)$ have obvious lifts to $\Aut(G)$ that we denote by $\alpha_\sigma$. 
It remains to check that $\bar\alpha_i^j$ can be lifted. Let $\eta\in\{-1,1\}$ and set
\[
y_i := x_j^\eta x_i x_j^{-\eta} \text{ and } y_k := x_k
\]
for $k\neq i$. It clearly suffices to check that the map 
\[
\alpha_\eta:\{x_1,\ldots ,x_n\}\to G,\ x_i\mapsto y_i
\] 
induces an automorphism $\alpha_\eta$ of $G$. We first check that in induces an endomophism, i.e. that $y_i$ satisfy the relations of $G$. For those relations not involving $x_i$ this is trivial. We check those involving $x_i$. Note that $y_i^2 = x_i^{-2}$. For $k\neq i$ we get
\[
y_i^{-1} y_k^2 y_i = x_j^{\eta} x_i^{-1} x_j^{-\eta} x_k^2 x_j^\eta x_i x_j^{-\eta} = x_k^{-2} = y_k^{-2}
\]
and
\[
y_k^{-1} y_i^2 y_k = x_k^{-1} x_i^{-2} x_k = x_i^2 = y_i^{-2},
\]
hence $\alpha_\eta$ induces an endomorphism. As $\alpha_\eta$ is clearly inverse to $\alpha_{-\eta}$ it follows that $\alpha_\eta$ is bijective which proves the claim. We will denote the lift $\alpha_1$ of $\bar\alpha_i^j$ by $\alpha_i^j$.
\end{proof}

\begin{remark}
\label{liftsofautomorphisms} 
It is a simple calculation that the lifts of the $\bar\alpha_\sigma$ and $\bar\alpha_i^j$ satisfy all relations of the presentation of $\Aut(W)$ given in Section~\ref{section:presentation-of-autw} except those of the second type, i.e. the relations $(\bar\alpha_i^j)^2=\hbox{id}$. Indeed $(\alpha_i^j)^2(x_i)=x_j^2x_ix_j^{-2}=x_ix_j^{-4}$ and $(\alpha_i^j)^2(x_k)=x_k$ for $k\neq i$ which implies that $(\alpha_i^j)^2=t_{\mathfrak a}$ with $\mathfrak a=-2\varepsilon_{ij}$.
\end{remark}

We conclude this section with a brief discussion of the outer automorphism group of $G$. We consider $A$ as a subgroup of $M^*$ via the inclusion 
\begin{equation}
\label{eq:iota}
\iota:A\to M^*,\ a = (x_1^2)^{a_1}\cdots (x_n^2)^{a_n}\mapsto [a_{ij}],
\end{equation} 
where
\[
a_{ij} = \left\{
\begin{array}{rll}
0 && \text{if } i=j,\\
-2a_i && \text{otherwise.}
\end{array}
\right.
\]

It follows easily that the translation automorphism $t_{\iota(a)}$ is simply conjugation by $a$ and therefore an inner automorphism of $G$. For all $\mathfrak a\in M^*\setminus \iota(A)$ some $t_{\mathfrak a}(x_i)$ is conjugate to a proper power of $x_i$ which implies that $t_{\mathfrak a}$ is not inner. Thus $t(M^*)\cap \Inn(G)=t(\iota(A))$. In particular we have the monomorphism 
\[
M^*/\iota(A)\to \Out(G),\ \mathfrak a \iota(A)\mapsto t_{\mathfrak a}\Inn(G).
\]
\begin{fdiagram}[ht]
\[ 
\begin{tikzcd}
&1\arrow{d}&1\arrow{d}&1\arrow{d}&\\
1\arrow{r}&A \arrow{r}\arrow{d}{\iota}&G\arrow{r}\arrow{d}&W\arrow{r}\arrow{d}&1\\
1\arrow{r}&M^* \arrow{r}{t}\arrow{d}&\Aut(G)\arrow{r}\arrow{d}&\Aut(W)\arrow{r}\arrow{d}&1\\
1\arrow{r}&M^*/\iota(A) \arrow{r}\arrow{d}&\Out(G)\arrow{r}\arrow{d}&\Out(W)\arrow{r}\arrow{d}&1\\
&1&1&1&
\end{tikzcd}
\]
\caption{Automorphisms and outer automorphisms of $G$}
\label{diag:basic-1}
\end{fdiagram}

\begin{theorem}
\label{theorem:aut-out-1}
Assume $n \geq 3$. Then Diagram \ref{diag:basic-1} is commutative and every row and column is exact.
\end{theorem}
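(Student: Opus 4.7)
The diagram consists of the defining extension \eqref{eq:extension} in the top row and the short exact sequence from Theorem~\ref{theorem:aut-w-1} in the middle row, so what remains is to establish the three columns, the commutativity of the four squares, and exactness of the bottom row. My plan is to reduce everything to two key observations: first, that $G$ (and $W$) are centerless, and second, that $t(M^*) \cap \Inn(G) = t(\iota(A))$, which is already recorded in the paragraph preceding the diagram.

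For the commutativity of the squares, the top-left square is precisely the statement (made just before the diagram) that $t_{\iota(a)}$ equals conjugation by $a$, and the top-right square commutes because conjugation by $g \in G$ descends to conjugation by $\bar g$ under the canonical projection $G \twoheadrightarrow W$. Given this, the induced maps $\Aut(G) \to \Out(G)$ and $\Aut(W) \to \Out(W)$ carry inner automorphisms to inner automorphisms compatibly with the vertical arrows, so the two bottom squares commute automatically.

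For the columns, the leftmost one is exact by definition: $\iota$ is visibly injective from the formula \eqref{eq:iota} (the off-diagonal entries $-2a_i$ determine $a$), and $M^*/\iota(A)$ is its cokernel. The right column reduces to $Z(W) = 1$, which holds because $W$ is a nontrivial free product with more than one factor. The middle column reduces to $Z(G) = 1$, which I expect to be the only step with genuine content: any $g \in Z(G)$ maps to a central element of $W$, hence to $1$, so $g \in A$; writing $g = (x_1^2)^{a_1} \cdots (x_n^2)^{a_n}$, the fact that $g$ must be fixed by every $\bar x_i$ under \eqref{equation:action-of-w}, together with $\bar x_i \cdot x_j^2 = x_j^{-2}$ for $j \neq i$, forces each $a_j$ to vanish.

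Finally, for the bottom row, injectivity of $M^*/\iota(A) \to \Out(G)$ is the second key observation recalled above; surjectivity of $\Out(G) \to \Out(W)$ follows from surjectivity of $\pi$ in Theorem~\ref{theorem:aut-w-1}; and for exactness in the middle, any $\alpha \in \Aut(G)$ with $\pi(\alpha) = c_{\bar g} \in \Inn(W)$ satisfies $\alpha \circ c_g^{-1} \in \ker \pi = t(M^*)$ for any lift $g \in G$ of $\bar g$, so $\alpha \Inn(G)$ lies in the image of $M^*/\iota(A)$. Equivalently, the bottom row is just the snake-lemma sequence attached to the top two rows. No serious obstacle is anticipated; everything beyond centerlessness of $G$ is formal bookkeeping on top of Theorem~\ref{theorem:aut-w-1}.
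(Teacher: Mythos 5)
Your proposal is correct and follows essentially the same route as the paper: centerlessness of $G$ and $W$ gives the two right-hand columns, the identity $t(M^*)\cap\Inn(G)=t(\iota(A))$ from the preceding discussion gives injectivity at the bottom left, and the rest is the diagram chase you describe. The only difference is that you prove $Z(G)=1$ directly from the relations (correctly), whereas the paper simply cites \cite[Proposition 5]{PS22}.
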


\begin{proof}
Since by \cite[Proposition 5]{PS22} the center of $G$ is trivial, we have the monomorphism $A \to \Inn(G)$ and the isomorphism $G \cong \Inn(G)$. As the free product of cyclic groups has trivial center it follows moreover that $W \cong \Inn(W)$. We therefore get the commutativity and exactness in the first two rows of the diagram. The rest is a simple diagram chase and follows from the isomorphism theorem and the discussion preceding this Lemma.
\end{proof}

\section{Presentation of \texorpdfstring{$\Aut(G)$}{Aut(G)}}

In this section we collect the facts established in the previous one to give finite presentations of $\Aut(G)$ and $\Out(G)$. The generating set of $\Aut(G)$ consists of the generators of $M^*$ discussed in Section \ref{section:translation-endomorphisms} and the lifts of the generators of $\Aut(W)$ introduced in the proof of Theorem~\ref{theorem:aut-w-1}. 
Thus the generating set consists of the automorphisms 
\begin{equation}
\label{equation:generators-of-autg}
\alpha_\sigma, \alpha_i^j, \varepsilon_{ij}, \delta_i,
\end{equation}
where $\sigma \in S_n$, $1 \leq i,j \leq n$ and $i \neq j$.
\begin{remark}
From now on, whenever we use the automorphisms introduced above, we assume that $1 \leq i,j \leq n$ and $i \neq j$, even without stating it explicitly.
\end{remark}

The previous sections imply that the automorphisms are defined in the following way. We give only their images on those generators of $G$, which they do not fix:
\begin{itemize}
\item $\alpha_{\sigma}(x_i) = x_{\sigma(i)}$
\item $\alpha_i^j(x_i) = x_j x_i x_j^{-1}$
\item $\varepsilon_{ij}(x_i) = x_i x_j^2$
\item $\delta_i(x_i) = x_i^{-1}$
\end{itemize}
 
The following is an immediate consequence of Proposition~\ref{proposition:shift-automorphisms}, the presentation of $\Aut(W)$ and Remark~\ref{liftsofautomorphisms} and some direct calculation.

\begin{theorem}
\label{theorem:presentation-of-autg}
The automorphism group of $G$ is generated by automorphisms from \eqref{equation:generators-of-autg}. A presentation of $\Aut(G)$ is then given by the following three sets of relations:
\begin{enumerate}[label=\arabic*.]
\item Relations coming from $M^*$:
\[
[\varepsilon_{ij},\varepsilon_{kl}]=1, \quad [\delta_i,\delta_j]=1, \quad \delta_i^2=1, \quad
\delta_i \varepsilon_{kl} \delta_i^{-1} = \left\{
\begin{array}{ll}
\varepsilon_{kl} & \text{if } l \neq i\\
\varepsilon_{kl}^{-1} & \text{if } l = i\\
\end{array}
\right.
\]
for all $1 \leq i,j,k,l \leq n$.

\item Relations coming from $\Aut(W)$:
\[
\alpha_\sigma \alpha_\tau = \alpha_{\sigma\tau}, \quad 
(\alpha_i^j)^2 = \varepsilon_{ij}^{-2}, \quad
[\alpha_i^j, \alpha_k^l] = 1, \quad
[\alpha_i^j\alpha_m^j, \alpha_i^m] = 1, \quad
\alpha_\sigma \alpha_i^j \alpha_\sigma^{-1} = \alpha_{\sigma(i)}^{\sigma(j)}
\]
for all $\sigma,\tau \in S_n$ and $1 \leq i,j,k,l,m \leq n$, with additional assumption that $j \neq k$ and $i \neq l$.

\item Relations coming from the action of lifts of generators of $\Aut(W)$ on $M^*$:
\begin{align*}
\alpha_\sigma \delta_i &= \delta_{\sigma(i)}\alpha_\sigma, &
[\alpha_i^j, \delta_{k}] &= 1, &
\alpha_i^j \delta_{j} &= \delta_j \varepsilon_{il}^{2} \alpha_i^j
\\
\alpha_\sigma \varepsilon_{kl} &= \varepsilon_{\sigma(k) \sigma(l)} \alpha_\sigma, &
[\alpha_i^j, \varepsilon_{kl}] &= 1, & 
\alpha_i^j \varepsilon_{jl} &= \varepsilon_{jl} \varepsilon_{il}^2 \alpha_i^j,
\\
&& &&
\alpha_i^j \varepsilon_{mi} &= \varepsilon_{mi}^{-1} \alpha_i^j,
\end{align*}
for all $\sigma \in S_n, 1 \leq i,j,k,l,m \leq n$, with additional assumption that $j \neq k$ and $i \neq l$.
\end{enumerate}
\end{theorem}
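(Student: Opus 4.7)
The plan is to apply the standard presentation lemma for group extensions: given a short exact sequence $1 \to K \to E \to Q \to 1$ with presentations $K = \langle X_K \mid R_K \rangle$ and $Q = \langle X_Q \mid R_Q \rangle$, the group $E$ is presented on the union $X_K \cup \tilde X_Q$, where $\tilde X_Q$ is any set of lifts of $X_Q$, subject to three families of relations: (a) the relations $R_K$; (b) for each $r \in R_Q$ a relation $\tilde r = w_r$, where $\tilde r$ rewrites $r$ using the lifted letters and $w_r \in K$ is its actual value in $E$; and (c) for each $\tilde g \in \tilde X_Q$ and each $k \in X_K$, a conjugation relation $\tilde g \, k \, \tilde g^{-1} = w_{g,k}$ with $w_{g,k}$ a word in $X_K$. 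I would apply this to the sequence $1 \to M^* \to \Aut(G) \to \Aut(W) \to 1$ of Theorem~\ref{theorem:aut-w-1}, using the generators $\{\varepsilon_{ij}, \delta_i\}$ of $M^*$ from Proposition~\ref{proposition:shift-automorphisms} and the lifts $\{\alpha_\sigma, \alpha_i^j\}$ of the generators of $\Aut(W)$ fixed in the proof of Theorem~\ref{theorem:aut-w-1}.

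Under this framework the three families in the statement correspond precisely to (a), (b), (c). Family 1 is immediate from Proposition~\ref{proposition:shift-automorphisms}. For family 2, I would go through the five relator types from Section~\ref{section:presentation-of-autw}: relations (1), (3), (4), (5) lift to honest equalities in $\Aut(G)$, which I would verify by evaluating both sides on the generators $x_1,\ldots,x_n$ of $G$; the only exception is $(\bar\alpha_i^j)^2 = \mathrm{id}$, whose lift requires the correction already computed in Remark~\ref{liftsofautomorphisms}, namely $(\alpha_i^j)^2 = t_{-2\varepsilon_{ij}} = \varepsilon_{ij}^{-2}$.

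Family 3 is the bulk of the work: for each pair $(\tilde g, k)$ with $\tilde g \in \{\alpha_\sigma, \alpha_i^j\}$ and $k \in \{\varepsilon_{ij}, \delta_i\}$, the conjugate $\tilde g \, k \, \tilde g^{-1}$ must be computed and identified as an element of $M^*$. The permutation cases are clear from $\alpha_\sigma(x_i) = x_{\sigma(i)}$. The Fouxe-Rabinovitch lifts acting on the $\varepsilon$'s and $\delta$'s require genuine calculation: one evaluates both sides on each generator $x_r$ and reduces using the defining relation $x_j^{-1} x_l^2 x_j = x_l^{-2}$ of $G$. The correction factors $\varepsilon_{il}^{\pm 2}$ that appear in the non-commuting relations (such as $\alpha_i^j \varepsilon_{jl} = \varepsilon_{jl} \varepsilon_{il}^2 \alpha_i^j$ and $\alpha_i^j \varepsilon_{mi} = \varepsilon_{mi}^{-1} \alpha_i^j$) arise from exactly this sign change when conjugating $x_l^2$ by $x_j^{\pm 1}$.

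The main obstacle is the case analysis in family 3; each individual verification is a short computation in $G$, but one has to be systematic in splitting by whether the indices $i,j,k,l$ coincide, to make sure no relation is missed. Once all three families are seen to hold in $\Aut(G)$, the extension lemma gives sufficiency: any word representing the identity can be rewritten by family 3 so that all $\alpha$-letters collect on one side of the word, leaving modulo family 2 a word over $\{\varepsilon_{ij},\delta_i\}$ that represents the identity in $M^*$, and this is a consequence of family 1.
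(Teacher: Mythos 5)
Your proposal is correct and matches the paper's (largely implicit) argument: the paper likewise reads the presentation off the extension $1 \to M^* \to \Aut(G) \to \Aut(W) \to 1$ of Theorem~\ref{theorem:aut-w-1}, citing Proposition~\ref{proposition:shift-automorphisms} for the first family, the presentation of $\Aut(W)$ together with Remark~\ref{liftsofautomorphisms} for the second, and ``direct calculation'' for the conjugation relations of the third. You have simply made explicit the extension-presentation lemma and the case analysis that the paper leaves to the reader.
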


By the description of the inclusion $\iota:A \to M^*$ preceding Theorem \ref{theorem:aut-out-1} and by Section \ref{section:presentation-of-autw}, we can write down the presentation of $\Out(G)$:

\begin{theorem}
\label{theorem:presentation-of-outg}
The outer automorphism group of $G$ is generated by the automorphisms from \eqref{equation:generators-of-autg}, which satisfy relations given in Theorem \ref{theorem:presentation-of-autg}, with the addition of the following two groups:
\begin{itemize}
    \item[1.\!\!'] Relations coming from the inclusion $A \stackrel{\iota}{\to} M^* \stackrel{t}{\to} \Aut(G)$:
    \[
    \varepsilon_{i1}^2 \cdots \varepsilon_{i,i-1}^2 \varepsilon_{i,i+1}^2 \cdots \varepsilon_{in}^2,
    \]
    for all $1 \leq i \leq n$.
    \item[2.\!\!'] Relations coming from the inclusion $W \to \Inn(W) \to \Aut(W)$:
    \[
    \alpha_1^j \cdots \alpha_{j-1}^j \alpha_{j+1}^j \cdots \alpha_n^j,
    \]
    for all $1 \leq j \leq n$.
\end{itemize}
\end{theorem}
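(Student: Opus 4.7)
The approach is to quotient the presentation of $\Aut(G)$ from Theorem \ref{theorem:presentation-of-autg} by the normal subgroup $\Inn(G)$, using the bottom-right short exact sequence $1 \to \Inn(G) \to \Aut(G) \to \Out(G) \to 1$ of Diagram \ref{diag:basic-1}. Because $G$ has trivial center by \cite[Proposition~5]{PS22} we have $\Inn(G) \cong G$, and because $\Inn(G)$ is already normal in $\Aut(G)$, it suffices to adjoin one relator for each element of an ordinary (not necessarily normal) generating set of $\Inn(G)$, expressed in the generators \eqref{equation:generators-of-autg}.

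For the generating set $\{x_1,\ldots,x_n\}$ of $G\cong\Inn(G)$ a direct calculation on the generators of $G$ shows that the inner automorphism $i_{x_j}$ fixes $x_j$ and sends $x_i\mapsto x_j x_i x_j^{-1}$ for $i\neq j$; it therefore coincides with the product $\alpha_1^j \cdots \alpha_{j-1}^j\alpha_{j+1}^j\cdots \alpha_n^j$, well-defined because the $\alpha_i^j$ with fixed upper index commute (a relation already present in the $\Aut(G)$ presentation). Imposing $i_{x_j}=1$ in $\Out(G)$ yields relations 2', and these alone are sufficient to pass from the presentation of $\Aut(G)$ to one of $\Out(G)$. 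I list the relations 1' separately because they come from the generating set $\{x_1^2,\ldots,x_n^2\}$ of $A\subset G$: the discussion preceding Theorem \ref{theorem:aut-out-1} identifies $i_{x_j^2}$ with $\iota(x_j^2)\in M^*\subset\Aut(G)$, and expanding $\iota(x_j^2)$ by the explicit formula for $\iota$ produces 1'. The relations in 1' are in fact consequences of 2' combined with the $\Aut(G)$-relations $(\alpha_i^j)^2=\varepsilon_{ij}^{-2}$ and the commutativity above (square the products in 2'), but including both families factorises the presentation cleanly along the bottom row $1\to M^*/\iota(A)\to \Out(G)\to \Out(W)\to 1$ of Diagram \ref{diag:basic-1}: the block 1' presents the kernel $M^*/\iota(A)$ over Proposition \ref{proposition:shift-automorphisms}, while 2' presents the quotient $\Out(W)$ over the presentation of $\Aut(W)$ from Section \ref{section:presentation-of-autw}.

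The main obstacle is purely bookkeeping: verifying the two identifications $i_{x_j}=\alpha_1^j\cdots\alpha_{j-1}^j\alpha_{j+1}^j\cdots\alpha_n^j$ and $i_{x_j^2}=\iota(x_j^2)$ inside $\Aut(G)$ (rather than only up to inner automorphism), each of which is a short direct calculation on the generators of $G$ using only the definitions of the relevant automorphisms and the defining relations of $G$. Once these identifications are in hand, the theorem follows immediately from the observation above that $\Inn(G)$ is normally generated in $\Aut(G)$ by the displayed words.
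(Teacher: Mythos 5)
Your overall strategy is sound and is essentially the one the paper (very tersely) relies on: since $\Inn(G)$ is normal in $\Aut(G)$, a presentation of $\Out(G)$ is obtained from that of Theorem~\ref{theorem:presentation-of-autg} by adjoining relators that spell out any ordinary generating set of $\Inn(G)$ in the generators \eqref{equation:generators-of-autg}. The identification $i_{x_j}=\alpha_1^j\cdots\alpha_{j-1}^j\alpha_{j+1}^j\cdots\alpha_n^j$ is correct (and well defined by the commutation relations), and your observation that the relators $2'$ alone already suffice, with $1'$ being formally redundant, is a genuine and worthwhile refinement over the paper's decomposition of $\Inn(G)$ into the kernel part $\iota(A)$ and the quotient part $\Inn(W)$.

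However, the verification you defer as ``bookkeeping'' is exactly where the writeup is inconsistent. Conjugation by $x_j^2$ sends $x_i\mapsto x_i x_j^{-4}$ for every $i\neq j$ and fixes $x_j$, so $i_{x_j^2}=\prod_{i\neq j}\varepsilon_{ij}^{-2}$, a product over the \emph{first} index with the second index $j$ fixed; the same element is what squaring $2'$ gives, since $\bigl(\prod_{i\neq j}\alpha_i^j\bigr)^2=\prod_{i\neq j}(\alpha_i^j)^2=\prod_{i\neq j}\varepsilon_{ij}^{-2}$. This is the \emph{transpose} of the printed relator $1'$, which is $\prod_{k\neq i}\varepsilon_{ik}^2$ with the second index varying. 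For $n\geq 3$ these are different elements of $M_0\cong\Z^{n(n-1)}$; worse, the row-sum word $\prod_{k\neq i}\varepsilon_{ik}^2$ alters only $x_i$ and its matrix does not have constant off-diagonal columns, so it does not lie in $\iota(A)$ and hence, by the paper's own identity $t(M^*)\cap\Inn(G)=t(\iota(A))$, it is not an inner automorphism at all. You therefore cannot simultaneously assert that ``expanding $\iota(x_j^2)$ produces $1'$'' and that ``$1'$ follows from squaring $2'$'': your own derivation yields $\varepsilon_{1j}^2\cdots\varepsilon_{nj}^2$ (omitting $\varepsilon_{jj}$), not $\varepsilon_{i1}^2\cdots\varepsilon_{in}^2$. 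The discrepancy traces back to the index in \eqref{eq:iota}, where the off-diagonal entry of $\iota(a)$ should be $-2a_j$ rather than $-2a_i$; carrying out the short calculation you postponed would have surfaced this, and the correct statement of $1'$ must use the column products.
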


\section{Semi-linear automorphisms of \texorpdfstring{$A$}{A}}

We start with collecting some elementary, but important facts on the structure of $W$-module $A$, defined by \eqref{equation:action-of-w}. Using the $\Z$-basis $(x_1^2,\ldots,x_n^2)$ of $A$ we get, that the action of $W$ on $A$ corresponds to the representation $\rho \colon W \to \GL_n(\Z)$ defined by
\[
\rho(\bar x_i) = \diag(\underbrace{-1,\ldots,-1}_{i-1},1,-1,\ldots,-1)
\]
for every $1 \leq i \leq n$. 
It follows that $A$ decomposes into a direct sum of $n$ copies of non-isomorphic modules of rank $1$. Since none of these submodules is trivial, the action of $W$ on $A$ is fixed-point free:
\begin{corollary}
\label{corollary:a^w}
$A^W := \{ a \in A : \forall_{\bar w \in W} \; \bar w \cdot a = a  \} = 1$.
\end{corollary}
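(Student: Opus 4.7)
My plan is to exploit the explicit form of $\rho$ recorded in the paragraph preceding the corollary. Since each matrix $\rho(\bar x_k)=\diag(-1,\ldots,-1,1,-1,\ldots,-1)$ is diagonal in the basis $(x_1^2,\ldots,x_n^2)$, each coordinate axis $L_i := \langle x_i^2\rangle\subset A$ is preserved by $\rho(\bar x_k)$ for every $k$. Hence the $W$-module $A$ splits as the internal direct sum $A = L_1\oplus\cdots\oplus L_n$ of rank-one $W$-submodules, and since taking fixed points commutes with finite direct sums,
\[
A^W = L_1^W \oplus\cdots\oplus L_n^W.
\]

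It therefore suffices to show $L_i^W = 0$ for each $i$. Given $i$, I would pick any index $k\neq i$ (which exists since we may assume $n\geq 2$; in the degenerate case $n=1$ the action of $W$ on $A$ is trivial and the statement would have to be read with that caveat). By the formula for $\rho(\bar x_k)$, the generator $\bar x_k$ acts on $L_i\cong\Z$ as multiplication by $-1$. An element $a\in L_i^W$ must in particular satisfy $\bar x_k\cdot a = a$, i.e.\ $-a=a$; since $L_i$ is torsion-free, $a=0$.

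There is no genuine obstacle here: the corollary is really just a restatement of the fact that every nontrivial rank-one summand of $A$ is acted on nontrivially (in fact by $-1$) by some generator $\bar x_k$ of $W$, together with $\Z$ being torsion-free. The only thing to be careful about is to use the $W$-invariance of the decomposition, which in turn relies on the diagonality of each $\rho(\bar x_k)$ in the chosen basis.
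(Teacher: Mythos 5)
Your proof is correct and follows essentially the same route as the paper: the corollary is presented there as an immediate consequence of the observation that $A$ decomposes into $n$ nontrivial rank-one $W$-submodules (via the diagonal matrices $\rho(\bar x_i)$), which is exactly the decomposition $A=L_1\oplus\cdots\oplus L_n$ you use. Your remark about the degenerate case $n=1$ is a fair caveat, but throughout the paper $n\geq 3$ is assumed where it matters, so there is nothing further to add.
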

\noindent
Moreover, by Schur's lemma \cite[Lemma I.8.1]{Feit82}, the structure of the endomorphism ring of $A$ is given by 
\[
\End_W(A) \cong \Z^n
\]
and with the given basis, it is just a set of diagonal integer matrices. We immediately get
\begin{corollary}
\label{corollary:W-automorhisms}
$\Aut_W(A) = \{ \diag(\pm 1, \ldots, \pm 1) \} \cong C_2^n$.
\end{corollary}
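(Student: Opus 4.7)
The plan is to deduce this directly from the structural statement $\End_W(A) \cong \Z^n$ that immediately precedes it. Since $\Aut_W(A)$ is by definition the group of units of the ring $\End_W(A)$, I would argue that passing to units on both sides of the isomorphism gives the claim.

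First, I would invoke the preceding paragraph: with respect to the basis $(x_1^2,\ldots,x_n^2)$, every $W$-equivariant endomorphism of $A$ is represented by a diagonal integer matrix $\diag(d_1,\ldots,d_n)$, and composition corresponds to componentwise multiplication, so this identification is actually a ring isomorphism $\End_W(A)\cong \Z^n$. Next I would observe that an element $\varphi\in\End_W(A)$ lies in $\Aut_W(A)$ precisely when $\varphi$ has a two-sided inverse in $\End_W(A)$; under the ring isomorphism above, this happens if and only if each $d_i$ is a unit in $\Z$, hence $d_i\in\{+1,-1\}$. Thus $\Aut_W(A)$ consists exactly of the diagonal matrices $\diag(\pm 1,\ldots,\pm 1)$.

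Finally, taking units of $\Z^n$ gives $(\Z^\times)^n = \{\pm 1\}^n \cong C_2^n$, which yields the stated isomorphism. One minor point I would make explicit is that any $\Z$-module automorphism that commutes with the $W$-action is automatically a $W$-module automorphism (and conversely), so there is no ambiguity in what $\Aut_W(A)$ means here.

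I do not expect any genuine obstacle: the whole content has already been packaged into the isomorphism $\End_W(A)\cong\Z^n$ via Schur's lemma applied to the decomposition of $A$ into $n$ pairwise non-isomorphic rank-one $W$-submodules. The only substantive check is the trivial fact that units of $\Z$ are $\pm 1$, so the proof is essentially a one-line corollary.
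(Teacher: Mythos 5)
Your argument is correct and is exactly the one the paper intends: the corollary follows immediately from the identification of $\End_W(A)$ with the diagonal integer matrices (Schur's lemma applied to the $n$ pairwise non-isomorphic rank-one summands), by passing to units and noting $\Z^\times=\{\pm 1\}$. The paper offers no further proof beyond ``we immediately get,'' so your write-up simply makes the same one-line deduction explicit.
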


\begin{remark}
Note that for a natural number $k$, $C_k$ and $\Z/k$ both denote cyclic groups of order $k$. We use different notation to distinguish between the multiplicative ($C_k$) and the additive ($\Z/k$) notation.
\end{remark}

We will further exploit the diagonal action of $W$ in the description of the second cohomology group of $A$. By \cite[Corollary 6.2.10]{We94} we have that
\begin{equation}
\label{equation:decomposition-of-h2}
H^2(W,A) \cong \bigoplus_{i=1}^n H^2(C_2,A).
\end{equation}
The action of each $C_2$ on $A$ has exactly one trivial summand, hence
\begin{equation}
\label{equation:cohomology-of-c2}
H^2(C_2,A) \cong \Z/2.
\end{equation}

Every class $\alpha \in H^2(W,A)$ defines some extension of $A$ by $W$. Because $A$ is a free abelian group, this extension will have torsion if and only if the restriction of $\alpha$ to some subgroup of $W$ of finite order will be zero. It is easy to check that it is enough to focus on representatives of conjugacy classes of such groups. Hence, the decomposition \eqref{equation:decomposition-of-h2} and the formula \eqref{equation:cohomology-of-c2} give us:

\begin{corollary}
\label{corollary:cohomology-class}
There is only one element in $H^2(W, A)$ which defines a torsion-free extension.
\end{corollary}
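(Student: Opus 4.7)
The plan is to combine the two preceding facts — the K\"unneth-style decomposition $H^2(W,A)\cong\bigoplus_{i=1}^n H^2(C_2,A)$ and the identification $H^2(C_2,A)\cong\Z/2$ — with the torsion-freeness criterion stated in the paragraph above the corollary, namely that the extension determined by $\alpha\in H^2(W,A)$ is torsion-free if and only if the restriction of $\alpha$ to every finite-order subgroup of $W$ is non-zero.

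First I would classify the non-trivial finite subgroups of $W=C_2\ast\cdots\ast C_2$ up to conjugacy. By the Kurosh subgroup theorem, every finite subgroup of a free product of finite groups is conjugate into a free factor, so the only conjugacy classes of non-trivial finite subgroups are those of the factors $\langle \bar x_i\rangle$, $1\le i\le n$. Hence the torsion-freeness criterion need only be checked on these $n$ subgroups.

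Next I would observe that the isomorphism \eqref{equation:decomposition-of-h2} is precisely the product of the restriction maps $\mathrm{res}_i\colon H^2(W,A)\to H^2(\langle\bar x_i\rangle,A)$; this is the content of \cite[Corollary 6.2.10]{We94} applied to the free product $W$. Under the identifications made, an element $\alpha$ corresponds to a tuple $(\alpha_1,\ldots,\alpha_n)\in\bigoplus_{i=1}^n\Z/2$, and $\mathrm{res}_i(\alpha)=\alpha_i$. By the criterion, the extension is torsion-free precisely when every $\alpha_i\neq 0$, i.e.\ when $\alpha_i=1$ in $\Z/2$ for every $i$. There is a unique such tuple, namely $(1,\ldots,1)$, which yields the corollary.

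The only step that is not completely formal is identifying the abstract decomposition \eqref{equation:decomposition-of-h2} with the direct sum of restriction maps, but this is standard and is exactly how the Mayer--Vietoris sequence for a free product produces the decomposition, so this is not a serious obstacle.
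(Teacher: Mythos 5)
Your argument is correct and is essentially the paper's own proof: the paper likewise reduces torsion-freeness to the non-vanishing of the restrictions to conjugacy-class representatives of the finite subgroups of $W$ and then reads off the unique class $(1,\ldots,1)$ from the decomposition \eqref{equation:decomposition-of-h2} together with \eqref{equation:cohomology-of-c2}. The only difference is that you make explicit two points the paper leaves implicit (the Kurosh-theorem classification of finite subgroups of $W$ and the identification of the decomposition with the product of restriction maps), which is a welcome amplification rather than a departure.
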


To compute the groups of automorphisms and outer automorphisms in some specific cases, the concept of semi-linear automorphism has been introduced in \cite[Definition III.2.2]{Ch86}. For convenience, we adapt it to our case:

\begin{definition}
A \emph{semi-linear} automorphism of $W$-module $A$ is a pair $(f,F) \in \Aut(A) \times \Aut(W)$, such that for every $a \in A$ and $w \in W$ we have
\[
f(w \cdot a) = F(w) \cdot f(a).
\] 
The group of semi-linear automorphisms of $A$ (with component-wise multiplication) is denoted by $\Aut_S(A)$.
\end{definition}

Recall that $\FR(W)$ is the Fouxe-Rabinovitch subgroup of $\Aut(W)$ (see Section \ref{section:presentation-of-autw}).
Since the action of $W$ on $A$ factors through an abelian group, we get the following lemma:
\begin{lemma}
\label{lemma:twisted-action-of-w}
Let $F \in \FR(W)$. Then for every $w \in W$ and $a \in A$ we have
\[
F(w) \cdot a = w \cdot a.
\]
\end{lemma}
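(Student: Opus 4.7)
The plan is to reduce the claim to a statement about abelianizations, since the paper has just pointed out that the action of $W$ on $A$ factors through an abelian group. Concretely, the representation $\rho$ given in the previous section lands inside the group of diagonal matrices $\{\diag(\pm 1,\ldots,\pm 1)\}\cong C_2^n$, so $\rho$ factors as $W\twoheadrightarrow W^{\mathrm{ab}}\to \GL_n(\Z)$. Thus two elements of $W$ that have the same image in $W^{\mathrm{ab}}$ act identically on $A$.

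The lemma therefore reduces to showing that every $F\in\FR(W)$ induces the identity on $W^{\mathrm{ab}}$. For this I would check it on the generators of $\FR(W)$: by the definition recalled in Section~\ref{section:presentation-of-autw}, a Fouxe–Rabinovitch generator $\bar\alpha_i^j$ fixes every $\bar x_k$ with $k\neq i$ and sends $\bar x_i$ to the conjugate $\bar x_j\bar x_i\bar x_j^{-1}$. Passing to the abelianization $W^{\mathrm{ab}}\cong C_2^n$, conjugates become trivial modifications, so the image of $\bar\alpha_i^j$ in $\Aut(W^{\mathrm{ab}})$ is the identity. Since $\FR(W)$ is generated by the $\bar\alpha_i^j$, any $F\in\FR(W)$ acts trivially on $W^{\mathrm{ab}}$.

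Putting the pieces together: for $w\in W$ and $F\in\FR(W)$, the elements $w$ and $F(w)$ project to the same class in $W^{\mathrm{ab}}$, and since the action $W\curvearrowright A$ factors through $W^{\mathrm{ab}}$, we conclude
\[
F(w)\cdot a \;=\; w\cdot a
\]
for every $a\in A$, as desired.

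There is no real obstacle here; the statement is essentially bookkeeping once one observes the two ingredients (the action factors through $W^{\mathrm{ab}}$, and Fouxe–Rabinovitch generators act trivially on $W^{\mathrm{ab}}$). The only thing worth being a little careful about is making explicit why $\rho$ factors through $W^{\mathrm{ab}}$ — this follows because the image of $\rho$ is abelian, being contained in the diagonal subgroup of order $2^n$.
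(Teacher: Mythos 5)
Your proof is correct and matches the paper's intended argument exactly: the paper states this lemma without a separate proof, justifying it only by the remark that the action of $W$ on $A$ factors through an abelian group (the image of $\rho$ being diagonal), and your observation that the Fouxe--Rabinovitch generators send each $\bar x_i$ to a conjugate and hence act trivially on $W^{\mathrm{ab}}$ is precisely the missing bookkeeping. Nothing to add.
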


\begin{proposition}
\label{proposition:structure-of-aut_s-w}
We have the following split extension of groups:
\[
1 \longrightarrow \Aut_W(A) \times \FR(W) \longrightarrow \Aut_S(W) \stackrel{\pi}{\longrightarrow} S_n \longrightarrow 1.
\]
\end{proposition}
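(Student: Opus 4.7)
My plan is to take $\pi$ to be the composition $\Aut_S(A)\hookrightarrow \Aut(A)\times \Aut(W)\twoheadrightarrow \Aut(W)\twoheadrightarrow S_n$, using the split quotient of the sequence \eqref{equation:aut_w}. I then need to verify three things: that $\ker\pi = \Aut_W(A)\times \FR(W)$, that $\pi$ is surjective, and that it admits a section.

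For the kernel description, a pair $(f,F)\in \Aut_S(A)$ lies in $\ker\pi$ exactly when $F\in \FR(W)$. In that case, Lemma \ref{lemma:twisted-action-of-w} yields $F(w)\cdot a = w\cdot a$ for every $w\in W$ and $a\in A$, so the semi-linearity condition $f(w\cdot a)=F(w)\cdot f(a)$ collapses to $f(w\cdot a)=w\cdot f(a)$, i.e.\ $f\in \Aut_W(A)$. Conversely, for any $f\in \Aut_W(A)$ and $F\in \FR(W)$ the same lemma shows the pair $(f,F)$ is semi-linear. Because multiplication in $\Aut_S(A)$ is inherited componentwise from $\Aut(A)\times \Aut(W)$ and the two coordinates now vary independently subject to no further constraint, this identifies $\ker\pi$ with the \emph{direct} product $\Aut_W(A)\times \FR(W)$.

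For surjectivity and splitting, given $\sigma\in S_n$ I set $F_\sigma:=\bar\alpha_\sigma$ from Section \ref{section:presentation-of-autw} and let $f_\sigma\in \Aut(A)$ be the $\Z$-basis permutation $f_\sigma(x_i^2)=x_{\sigma(i)}^2$. Using the explicit formula for $\rho$ after Corollary \ref{corollary:a^w} (the generator $\bar x_k$ acts as $-1$ on $x_i^2$ precisely when $k\neq i$, and as $+1$ otherwise), a direct check on the pairs $(\bar x_k, x_i^2)$ gives $f_\sigma(\bar x_k\cdot x_i^2)=\bar x_{\sigma(k)}\cdot f_\sigma(x_i^2)=F_\sigma(\bar x_k)\cdot f_\sigma(x_i^2)$. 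This extends to arbitrary $w$ and $a$ by $\Z$-linearity in $a$ and multiplicativity of the $W$-action. Thus $(f_\sigma,F_\sigma)\in \Aut_S(A)$, and $\sigma\mapsto (f_\sigma,F_\sigma)$ is a homomorphic section of $\pi$, as each coordinate is a homomorphism in $\sigma$. The one step requiring real care is the kernel argument: one must use Lemma \ref{lemma:twisted-action-of-w} to strip the $F$-twist \emph{for every} $w\in W$, since this is what forces $\ker\pi$ to be a genuine direct product rather than merely a semidirect or general extension.
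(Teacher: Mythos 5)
Your proof is correct and follows essentially the same route as the paper: the same projection $\pi$, the same use of Lemma \ref{lemma:twisted-action-of-w} to identify $\ker\pi$ with $\Aut_W(A)\times\FR(W)$, and the same section $\sigma\mapsto(f_\sigma,\bar\alpha_\sigma)$ (your $f_\sigma$ is exactly the restriction $\alpha_\sigma|_A$ used in the paper). You merely spell out the converse inclusion and the semi-linearity check that the paper labels as straightforward or obvious.
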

\begin{proof}
The epimorphism onto $S_n$ is given by the composition of the projection to the second factor and the epimorphism in \eqref{equation:aut_w}. The obvious section is given by the formula $\sigma \mapsto ({\alpha_\sigma}_{|A}, \bar\alpha_\sigma)$. 

The inclusion $\Aut_W(A) \times \FR(W) \subset \ker \pi$ is straightforward. Let $(f,F) \in \ker \pi$. By \eqref{equation:aut_w}, $F \in \FR(W)$, hence by Lemma \ref{lemma:twisted-action-of-w}
\[
f (w \cdot a) = F(w) \cdot f(a) = w \cdot f(a)
\]
for every $w \in W, a \in A$ and in the consequence, $f \in \Aut_W(A)$.
\end{proof}

Proposition \ref{proposition:structure-of-aut_s-w} gives us some insight into the structure of $\Aut_S(A)$.
\begin{corollary}
\label{corollary:splits}
Short exact sequences
\[
1 \longrightarrow \Aut_W(A) \longrightarrow \Aut_S(A) \stackrel{\pi}{\longrightarrow} \Aut(W) \longrightarrow 1
\]
and
\[
1 \longrightarrow \Aut_W(A) \longrightarrow \Aut_S(A)/W \stackrel{\pi}{\longrightarrow} \Out(W) \longrightarrow 1
\]
both split.
\end{corollary}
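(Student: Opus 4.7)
The plan is to construct an explicit homomorphic section $s\colon \Aut(W)\to \Aut_S(A)$ of the first sequence such that $s$ sends $\Inn(W)$ into the subgroup $W\subset \Aut_S(A)$ (with $W$ embedded as $w\mapsto(\rho(w), c_w)$, where $c_w$ denotes conjugation by $w$); the same $s$ will then descend to give a section of the second sequence. For $1\le i\le n$, let $\delta_i\in\Aut_W(A)$ denote the diagonal automorphism of $A$ sending $x_i^2\mapsto x_i^{-2}$ and fixing $x_k^2$ for $k\ne i$, i.e.\ the restriction to $A$ of the shift $\delta_i$ from Section \ref{section:translation-endomorphisms}. I lift the generators of $\Aut(W)$ from Section \ref{section:presentation-of-autw} by
\[
s(\bar\alpha_\sigma) := ({\alpha_\sigma}_{|A}, \bar\alpha_\sigma), \qquad s(\bar\alpha_i^j) := (\delta_i, \bar\alpha_i^j).
\]
Both elements lie in $\Aut_S(A)$: the first by the $S_n$-section from Proposition \ref{proposition:structure-of-aut_s-w}, and the second because $\bar\alpha_i^j\in\FR(W)$ combined with Lemma \ref{lemma:twisted-action-of-w} ensures that every element of $\Aut_W(A)\times\FR(W)$ is semi-linear.

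The key verification is that the five families of relations for $\Aut(W)$ from Section \ref{section:presentation-of-autw} are respected by these lifts. Relations (1), (2), (3) follow immediately from componentwise composition in $\Aut_S(A)$, $\delta_i^2=1$, and the abelianness of $\Aut_W(A)$; relation (4) reduces to the identity $\delta_i\delta_m\delta_i = \delta_m = \delta_i\delta_i\delta_m$; and relation (5) reduces to ${\alpha_\sigma}_{|A}\delta_i = \delta_{\sigma(i)}{\alpha_\sigma}_{|A}$, which simply says that conjugation by the permutation matrix ${\alpha_\sigma}_{|A}$ permutes the diagonal entries. Since $\pi\circ s = \mathrm{id}$ by construction, $s$ splits the first sequence.

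To handle the second sequence, I show $s(\Inn(W))\subseteq W$. As $\Inn(W)$ is generated by the $c_{\bar x_j}$, only these need to be checked. The commutativity relations (3) imply that the $\bar\alpha_i^j$ with fixed $j$ and varying $i\ne j$ pairwise commute, and a short calculation yields $c_{\bar x_j} = \prod_{i\ne j}\bar\alpha_i^j$. Hence
\[
s(c_{\bar x_j}) = \Bigl(\prod_{i\ne j}\delta_i,\; c_{\bar x_j}\Bigr) = \bigl(\rho(\bar x_j),\, c_{\bar x_j}\bigr),
\]
which is precisely the image of $\bar x_j$ under the embedding $W\hookrightarrow \Aut_S(A)$. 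Thus $s$ descends to a homomorphic section $\bar s\colon \Out(W)\to \Aut_S(A)/W$, splitting the second sequence.

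The main subtlety is the choice of the first coordinate of $s(\bar\alpha_i^j)$: the obvious choice $(\mathrm{id},\bar\alpha_i^j)$ splits the first sequence by Lemma \ref{lemma:twisted-action-of-w}, but it fails to descend since $c_{\bar x_j}$ would map to $(\mathrm{id},c_{\bar x_j})\notin W$. The correction by $\delta_i$ is forced by the equation $\prod_{i\ne j}\delta_i = \rho(\bar x_j)$, and the real content of the argument is that this particular correction simultaneously respects all the remaining $\Aut(W)$-relations, thanks to the abelianness of $\Aut_W(A)$ and the diagonal action of the permutation matrices.
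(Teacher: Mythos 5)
Your proof is correct, and it is in fact more careful than the paper's own argument, which disposes of the corollary in two sentences: the first splitting is obtained from Proposition \ref{proposition:structure-of-aut_s-w} together with the splitting of \eqref{equation:aut_w} (the ``naive'' section $F\mapsto(\mathrm{id},F)$ on $\FR(W)$, $\sigma\mapsto({\alpha_\sigma}_{|A},\bar\alpha_\sigma)$ on $S_n$), and the second is declared obvious from $W\cong\Inn(W)\subset\FR(W)$. That last step is only immediate if one embeds $W$ into $\Aut_S(A)$ with trivial first coordinate, $w\mapsto(\mathrm{id},c_w)$; but the embedding forced by commutativity of Diagram \ref{diag:basic-2} (restriction of $\Inn(G)$) is $w\mapsto(\rho(w),c_w)$, and with respect to that embedding the naive section does \emph{not} carry $\Inn(W)$ into $W$, exactly as you observe. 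Your twisted lift $s(\bar\alpha_i^j)=(\delta_i,\bar\alpha_i^j)$ repairs this: the identity $\prod_{i\neq j}\delta_i=\rho(\bar x_j)$ together with $c_{\bar x_j}=\prod_{i\neq j}\bar\alpha_i^j$ gives $s(\Inn(W))\subseteq W$, and your verification of relations (1)--(5) (using $\delta_i^2=\mathrm{id}$, commutativity of $\Aut_W(A)$, and $\alpha_\sigma\delta_i\alpha_\sigma^{-1}=\delta_{\sigma(i)}$) is complete, so $s$ is a genuine homomorphic section that descends to $\Out(W)\to\Aut_S(A)/W$. In short: same structural ingredients as the paper (Proposition \ref{proposition:structure-of-aut_s-w}, the presentation of $\Aut(W)$, Lemma \ref{lemma:twisted-action-of-w}), but your version supplies the correction term that the paper's ``obvious'' glosses over, and it is your version that actually yields the splitting used in Theorem \ref{theorem:aut-out-2}.
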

\begin{proof}
It is enough to note that $\FR(W)$ is a normal subgroup of $\Aut_S(A)$ and then use the short exact sequence \eqref{equation:aut_w}. The splitting of the second sequence is obvious when one notes the inclusion
\[
W \cong \Inn(W) \subset \FR(W).
\]
\end{proof}

\begin{remark}
\label{remark:action-of-semilinear-automorphisms}
The group $\Aut_S(A)$ acts on $H^2(W,A)$ as follows. If $\gamma \in H^2(W,A)$, $c$ is a representative $2$-cocycle of $\gamma$ and $\eta = (f,F) \in \Aut_S(A)$, then
\[
\eta * \gamma = [ \eta * c ],
\]
where
\[
\eta * c(w_1,w_2) = fc(F^{-1}(w_1),F^{-1}(w_2))
\]
for every $w_1,w_2 \in W$ (see \cite[page 201]{Ch86}).
\end{remark}

Assume that $\gamma \in H^2(W, A)$ is the cohomology class defining the extension \eqref{eq:extension}. Denote by $\Aut_S(A)_\gamma$ the stabilizer of $\gamma$ in $\Aut_S(A)$ under the action defined in Remark \ref{remark:action-of-semilinear-automorphisms}.

\begin{corollary}
$\Aut_S(A)_\gamma = \Aut_S(A)$.
\end{corollary}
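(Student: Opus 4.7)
The plan is to use Corollary \ref{corollary:cohomology-class} to reduce the statement to showing that the $\Aut_S(A)$-action on $H^2(W,A)$ preserves the property of the extension being torsion-free. The class $\gamma$ itself defines the torsion-free extension $G$ (torsion-freeness of $G$ follows from Lemma \ref{newlemma}: any lift of a torsion element of $W$, which by Kurosh's theorem is conjugate to some $\bar x_i$, squares to a nontrivial element of $A$, hence has infinite order). So if one shows that $\eta*\gamma$ also defines a torsion-free extension for every $\eta\in\Aut_S(A)$, the uniqueness statement of Corollary \ref{corollary:cohomology-class} forces $\eta*\gamma=\gamma$.

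To execute this, fix a $2$-cocycle $c\colon W\times W\to A$ representing $\gamma$ and write $\eta=(f,F)$. By Remark \ref{remark:action-of-semilinear-automorphisms}, the class $\eta*\gamma$ is realized by the group $G_\eta$ with underlying set $A\times W$ and twisted multiplication
\[
(a,w)\cdot_{G_\eta}(a',w')=\bigl(a+w\cdot a'+f(c(F^{-1}(w),F^{-1}(w'))),\,ww'\bigr).
\]
I would then define
\[
\Phi\colon G\longrightarrow G_\eta,\qquad (a,w)\longmapsto (f(a),F(w)),
\]
and verify that $\Phi$ is a group isomorphism. Bijectivity is immediate from $f\in\Aut(A)$ and $F\in\Aut(W)$; the multiplicativity check is a short calculation in which the cross term $F(w)\cdot f(a')$ is rewritten as $f(w\cdot a')$ via the defining semi-linear identity, and the cocycle term $\eta*c(F(w),F(w'))$ collapses to $f(c(w,w'))$ since $F^{-1}\circ F=\mathrm{id}_W$.

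Once $\Phi$ is in hand, $G\cong G_\eta$ as abstract groups, so $G_\eta$ is torsion-free, so $\eta*\gamma$ represents the torsion-free cohomology class, and Corollary \ref{corollary:cohomology-class} yields $\eta*\gamma=\gamma$. As $\eta$ was arbitrary, $\Aut_S(A)_\gamma=\Aut_S(A)$. The only non-mechanical step is the multiplicativity check for $\Phi$; it is brief, but it demands careful bookkeeping of the two guises of the $W$-action on $A$ (directly on the $G$-side, pulled back through $F$ on the $G_\eta$-side) so that the semi-linear identity is invoked in exactly the right spot and the $F^{-1}\circ F$ cancellation in the cocycle term is handled cleanly.
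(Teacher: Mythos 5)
Your proposal is correct and follows exactly the paper's argument: both invoke Corollary \ref{corollary:cohomology-class} after observing that $\eta*\gamma$ still defines a torsion-free extension. The paper simply asserts this last fact ("$\gamma$ and hence $\eta*\gamma$ both define torsion-free extensions"), whereas you make it explicit by exhibiting the isomorphism $\Phi\colon(a,w)\mapsto(f(a),F(w))$ between $G$ and the extension group of $\eta*\gamma$ — a worthwhile elaboration, but not a different route.
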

\begin{proof}
Let $\eta \in \Aut_S(A)$. By assumption, $\gamma$ and hence $\eta * \gamma$ both define torsion-free extensions of $A$ by $W$. By Corollary \ref{corollary:cohomology-class}, only one element of $H^2(W, A)$ has this property and hence $\eta * \gamma = \gamma$.
\end{proof}

\section{Automorphisms of \texorpdfstring{$G$}{G}. Second approach}

By Corollary \ref{corollary:a^w} and \cite[Proposition 5]{PS22}, $A^W = Z(G)$, since both of these groups are trivial. Using Corollary \ref{corollary:characteristic} allows us to use the top diagram from page 201 of \cite{Ch86}, which in particular gives us another view of $\Aut(G)$ and $\Out(G)$ than the one given by Theorem \ref{theorem:aut-out-1}. Recall that $\Aut^0(G)$ is the group of those automorphisms of $G$ which induce identities on both $A$ and $W$. Referring to the notation introduced in Section \ref{section:translation-endomorphisms}, we begin by making the following observation:
\begin{lemma}
\label{lemma:aut-0}
$\Aut^0(G) = t(M_0)$.
\end{lemma}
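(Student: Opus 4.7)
The plan is to prove equality of the two sets by showing each inclusion separately, using Lemma \ref{newlemma} as the central computational tool.

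For the inclusion $t(M_0) \subseteq \Aut^0(G)$, I first need to observe that $M_0 \subseteq M^*$: every matrix in $M_0$ has all diagonal entries equal to $0$, so by Lemma \ref{lemma:invertitbles-in-monoid} it lies in the group of units, and consequently $t_{\mathfrak a}$ is indeed an automorphism of $G$ for every $\mathfrak a \in M_0$. Given such an $\mathfrak a = [a_{ij}]$, write $a_i = (x_1^2)^{a_{i1}}\cdots (x_n^2)^{a_{in}}\in A$, so that $t_{\mathfrak a}(x_i) = x_ia_i$. Passing to the quotient $W$, the image $\overline{x_ia_i}$ equals $\bar x_i$, so $t_{\mathfrak a}$ induces the identity on $W$. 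For the action on $A$, it suffices to check the generators $x_i^2$. By equation \eqref{equation:square-of-shift} (which is Lemma \ref{newlemma}),
\[
t_{\mathfrak a}(x_i^2) = (x_ia_i)^2 = (x_i^2)^{2a_{ii}+1} = x_i^2
\]
because $a_{ii}=0$. Hence $t_{\mathfrak a} \in \Aut^0(G)$.

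For the reverse inclusion, let $\phi \in \Aut^0(G)$. Because $\phi$ acts trivially on $W$, we have $\phi(x_i)A = x_iA$ for each $i$, so there exist unique elements $a_i = (x_1^2)^{a_{i1}}\cdots (x_n^2)^{a_{in}}\in A$ with $\phi(x_i) = x_i a_i$. Collecting the exponents yields a matrix $\mathfrak a = [a_{ij}] \in M$, and by the uniqueness part of Lemma \ref{lemma:shift-endomorphisms} we get $\phi = t_{\mathfrak a}$. It remains only to show $\mathfrak a \in M_0$, i.e. $a_{ii}=0$ for all $i$. Since $\phi$ is the identity on $A$ and $x_i^2 \in A$,
\[
x_i^2 = \phi(x_i^2) = (x_ia_i)^2 = (x_i^2)^{2a_{ii}+1}
\]
by Lemma \ref{newlemma}. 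Since $A$ is free abelian with basis $(x_1^2,\ldots,x_n^2)$, comparing exponents forces $2a_{ii}+1 = 1$, whence $a_{ii}=0$. Thus $\mathfrak a \in M_0$ and $\phi \in t(M_0)$.

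There is no real obstacle: the verification is purely mechanical once Lemma \ref{newlemma} is available, and the only point requiring minor attention is ensuring that $t_{\mathfrak a}$ for $\mathfrak a\in M_0$ is actually bijective (handled by invoking Lemma \ref{lemma:invertitbles-in-monoid}) and that any $\phi\in\Aut^0(G)$ is a priori a translation endomorphism (handled by the $W$-triviality together with Lemma \ref{lemma:shift-endomorphisms}).
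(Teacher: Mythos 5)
Your proof is correct and follows essentially the same route as the paper: both arguments reduce to the formula $t_{\mathfrak a}(x_i^2)=(x_i^2)^{2a_{ii}+1}$ from Lemma \ref{newlemma}, which forces the diagonal entries to vanish. You are slightly more thorough than the paper in that you verify the inclusion $t(M_0)\subseteq\Aut^0(G)$ explicitly and deduce $\phi=t_{\mathfrak a}$ directly from $W$-triviality and the uniqueness clause of Lemma \ref{lemma:shift-endomorphisms}, whereas the paper simply cites Theorem \ref{theorem:aut-w-1} for the containment $\Aut^0(G)\subseteq t(M^*)$ and leaves the converse implicit.
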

\begin{proof}
By Theorem \ref{theorem:aut-w-1}, $\Aut^0(G) \subset \ker \pi = t(M^*)$. Let $t_a = t([a_{ij}])$ for some $[a_{ij}] \in M^*$. By formula \eqref{equation:square-of-shift}, we have
\[
t_a(x_i^2) = (x_i^2)^{2a_{ii}+1}.
\]
We demand from $t_a$ to induce the identity on $A$, hence $(x_i^2)^{2a_{ii}}=1$. Since this is an element of a free abelian group, we get $a_{ii}=0$, as desired.
\end{proof}

\begin{fdiagram}[ht]
\[ 
\begin{tikzcd}
&1\arrow{d}&1\arrow{d}&1\arrow{d}&\\
1\arrow{r}&A \arrow{r}\arrow{d}{\iota}&G\arrow{r}\arrow{d}&W\arrow{r}\arrow{d}&1\\
1\arrow{r}&M_0 \arrow{r}{t}\arrow{d}&\Aut(G)\arrow{r}\arrow{d}&\Aut_S(A)\arrow{r}\arrow{d}&1\\
1\arrow{r}&H^1(W,A) \arrow{r}\arrow{d}&\Out(G)\arrow{r}\arrow{d}& \Aut_W(A) \rtimes \Out(W)\arrow{r}\arrow{d}&1\\
&1&1&1&
\end{tikzcd}
\]
\caption{Automorphisms and outer automorphisms of $G$}
\label{diag:basic-2}
\end{fdiagram}

\begin{theorem}
\label{theorem:aut-out-2}
Assume $n \geq 3$. Diagram \ref{diag:basic-2} is commutative, where every row and column is exact. In particular, up to isomorphism, the bottom row looks as follows:
\[
1 \longrightarrow \Z^{n(n-2)} \times (\Z/2)^n \longrightarrow \Out(G) \longrightarrow (\Z/2)^n \rtimes \Out(W) \longrightarrow 1.
\]
\end{theorem}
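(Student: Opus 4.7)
The plan is to realize Diagram~\ref{diag:basic-2} as the instance of the standard ``extension diagram'' attached to the characteristic subgroup $A\trianglelefteq G$ that appears on page~201 of \cite{Ch86}, and then to identify each entry with the concrete group produced in Sections~\ref{section:translation-endomorphisms} and~6. The top row is the extension \eqref{eq:extension}, the middle column is the definition of $\Out(G)$, and the remaining pieces will be built one at a time.

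First I would establish the middle row. Using that $A$ is characteristic (Corollary~\ref{corollary:characteristic}), restriction to $A$ and reduction modulo $A$ produce the homomorphism $\Aut(G)\to\Aut_S(A)$, $\alpha\mapsto(\alpha|_A,\bar\alpha)$, whose kernel is $\Aut^0(G)=t(M_0)$ by Lemma~\ref{lemma:aut-0}. For surjectivity I would invoke the lifting criterion of \cite{Ch86}: a pair $(f,F)\in\Aut_S(A)$ lifts to an automorphism of $G$ iff it fixes the cohomology class $\gamma\in H^2(W,A)$ of \eqref{eq:extension}, and by the corollary immediately preceding the theorem one has $\Aut_S(A)_\gamma=\Aut_S(A)$. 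This is the step I expect to be the main obstacle, as it is where the cohomology input of Section~6 is genuinely used; the remainder of the argument is bookkeeping.

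For the right column, the composition $G\to\Aut(G)\to\Aut_S(A)$ factors through $W=G/A$ because every $a\in A$ acts trivially on $A$ (as $A$ is abelian) and induces the trivial inner automorphism on $W$ (as $\bar a=1$). Injectivity of the resulting map $W\hookrightarrow\Aut_S(A)$ is then the triviality of $Z(W)$, and its cokernel is $\Aut_W(A)\rtimes\Out(W)$ by the second split sequence of Corollary~\ref{corollary:splits}. For the left column, I would identify $M_0$ with the group $Z^1(W,A)$ of $1$-cocycles: since $W$ is the free product of the $\langle\bar x_i\rangle\cong C_2$, a cocycle is determined by its values $a_i\in A$ on the $\bar x_i$ subject to $a_i+\bar x_i\cdot a_i=0$, which is precisely the vanishing of the $x_i^2$-coefficient of $a_i$ and matches the zero-diagonal defining $M_0$. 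A short calculation, essentially the observation preceding Theorem~\ref{theorem:aut-out-1} that $t_{\iota(a)}$ is conjugation by $a$, identifies $\iota(A)$ with the coboundary group $B^1(W,A)$, giving $M_0/\iota(A)\cong H^1(W,A)$.

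Commutativity of the diagram can then be checked on generators, and the bottom row arises from the snake lemma applied to the top and middle rows with vertical maps $\iota$, $c$ and $W\hookrightarrow\Aut_S(A)$: all three kernels vanish, and surjectivity of the final arrow follows from that of $\Aut(G)\to\Aut_S(A)$. To read off the concrete isomorphism type, I would decompose $M_0=\bigoplus_{i=1}^n M_0^{(i)}\cong\bigoplus_i\Z^{n-1}$ by rows, observe that $\iota(A)$ sits inside this decomposition as $\bigoplus_i 2\Z\cdot\mathbf{1}$, and change basis in each summand to split off $\Z/2\oplus\Z^{n-2}$; this gives $H^1(W,A)\cong(\Z/2)^n\times\Z^{n(n-2)}$. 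Combined with $\Aut_W(A)\cong(\Z/2)^n$ from Corollary~\ref{corollary:W-automorhisms}, this yields the stated bottom row $1\to\Z^{n(n-2)}\times(\Z/2)^n\to\Out(G)\to(\Z/2)^n\rtimes\Out(W)\to 1$.
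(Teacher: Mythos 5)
Your proposal is correct and follows essentially the same route as the paper: the paper simply cites Charlap's diagram from page 201 of \cite{Ch86} together with Lemma \ref{lemma:aut-0}, Proposition \ref{proposition:shift-automorphisms}, the inclusion $\iota$ and Corollary \ref{corollary:splits}, while you reconstruct that diagram explicitly (identifying $\Aut^0(G)=t(M_0)$ with $Z^1(W,A)$, $\iota(A)$ with $B^1(W,A)$, and using the stabilizer computation $\Aut_S(A)_\gamma=\Aut_S(A)$ for surjectivity onto $\Aut_S(A)$). Your row-by-row computation of $M_0/\iota(A)\cong\Z^{n(n-2)}\times(\Z/2)^n$ is exactly the elementary-divisor argument the paper alludes to.
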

\begin{proof}
Diagram \ref{diag:basic-2} is exactly the before mentioned diagram from the top of page 201 of \cite{Ch86}. The isomorphism type of $H^1(W, A)$ is given by Lemma \ref{lemma:aut-0}, Proposition \ref{proposition:shift-automorphisms}, inclusion $\iota \colon A \to M^*$ defined in \eqref{eq:iota} and the elementary divisor theorem. The right-hand side of the short exact sequence follows from Corollary \ref{corollary:splits}.
\end{proof}

\section*{Acknowledgements}

The main part of the work was done at the Kiel University. The first two authors thank the Department of Mathematics of the university for the hospitality. The work was supported by the SEA-EU Alliance.

\bibliographystyle{alpha}
\bibliography{bibl}

\end{document}